\def\blfootnote{\gdef\@thefnmark{}\@footnotetext}
\newcommand{\keywords}[1]{\par\addvspace\baselineskip
\noindent\keywordname\enspace\ignorespaces#1}
\begin{document}

\newenvironment{pf}{\noindent\textbf{Proof.}}{\hfill{$\Box$}}
 \newenvironment{ispat}{\noindent\textbf{İspat.}}{\hfill{$\Box$}}
\renewenvironment{proof}{\noindent\textit{Proof.}}{\hfill{$\Box$}}


\newcommand{\binomial}[2]{\left(\begin{array}{c}#1\\#2\end{array}\right)}
\newcommand{\zar}{{\rm zar}}
\newcommand{\an}{{\rm an}}
\newcommand{\red}{{\rm red}}
\newcommand{\codim}{{\rm codim}}
\newcommand{\rank}{{\rm rank}}
\newcommand{\Pic}{{\rm Pic}}
\newcommand{\Div}{{\rm Div}}
\newcommand{\Hom}{{\rm Hom}}
\newcommand{\im}{{\rm im}}
\newcommand{\Spec}{{\rm Spec}}
\newcommand{\sing}{{\rm sing}}
\newcommand{\reg}{{\rm reg}}
\newcommand{\Char}{{\rm char}}
\newcommand{\Tr}{{\rm Tr}}
\newcommand{\tr}{{\rm tr}}
\newcommand{\supp}{{\rm supp}}
\newcommand{\Gal}{{\rm Gal}}
\newcommand{\Min}{{\rm Min \ }}
\newcommand{\Max}{{\rm Max \ }}
\newcommand{\Span}{{\rm Span  }}

\newcommand{\Frob}{{\rm Frob}}
\newcommand{\lcm}{{\rm lcm}}

\newcommand{\ifc}{{\rm if \ }}

\newcommand{\soplus}[1]{\stackrel{#1}{\oplus}}
\newcommand{\dlog}{{\rm dlog}\,}
\newcommand{\limdir}[1]{{\displaystyle{\mathop{\rm
lim}_{\buildrel\longrightarrow\over{#1}}}}\,}
\newcommand{\liminv}[1]{{\displaystyle{\mathop{\rm
lim}_{\buildrel\longleftarrow\over{#1}}}}\,}
\newcommand{\boxtensor}{{\Box\kern-9.03pt\raise1.42pt\hbox{$\times$}}}
\newcommand{\sext}{\mbox{${\mathcal E}xt\,$}}
\newcommand{\shom}{\mbox{${\mathcal H}om\,$}}
\newcommand{\coker}{{\rm coker}\,}
\renewcommand{\iff}{\mbox{ $\Longleftrightarrow$ }}
\newcommand{\onto}{\mbox{$\,\>>>\hspace{-.5cm}\to\hspace{.15cm}$}}

\newcommand{\ord}{\mathrm{ord}}


\newcommand{\sA}{{\mathcal A}}
\newcommand{\sB}{{\mathcal B}}
\newcommand{\sC}{{\mathcal C}}
\newcommand{\sD}{{\mathcal D}}
\newcommand{\sE}{{\mathcal E}}
\newcommand{\sF}{{\mathcal F}}
\newcommand{\sG}{{\mathcal G}}
\newcommand{\sH}{{\mathcal H}}
\newcommand{\sI}{{\mathcal I}}
\newcommand{\sJ}{{\mathcal J}}
\newcommand{\sK}{{\mathcal K}}
\newcommand{\sL}{{\mathcal L}}
\newcommand{\sM}{{\mathcal M}}
\newcommand{\sN}{{\mathcal N}}
\newcommand{\sO}{{\mathcal O}}
\newcommand{\sP}{{\mathcal P}}
\newcommand{\sQ}{{\mathcal Q}}
\newcommand{\sR}{{\mathcal R}}
\newcommand{\sS}{{\mathcal S}}
\newcommand{\sT}{{\mathcal T}}
\newcommand{\sU}{{\mathcal U}}
\newcommand{\sV}{{\mathcal V}}
\newcommand{\sW}{{\mathcal W}}
\newcommand{\sX}{{\mathcal X}}
\newcommand{\sY}{{\mathcal Y}}
\newcommand{\sZ}{{\mathcal Z}}


\newcommand{\A}{{\mathbb A}}
\newcommand{\B}{{\mathbb B}}
\newcommand{\C}{{\mathbb C}}
\newcommand{\D}{{\mathbb D}}
\newcommand{\E}{{\mathbb E}}
\newcommand{\F}{{\mathbb{F}}}
\newcommand{\G}{{\mathbb G}}
\newcommand{\HH}{{\mathbb H}}
\newcommand{\I}{{\mathbb I}}
\newcommand{\J}{{\mathbb J}}
\newcommand{\M}{{\mathbb M}}
\newcommand{\N}{{\mathbb N}}
\renewcommand{\P}{{\mathbb P}}
\newcommand{\Q}{{\mathbb Q}}
\newcommand{\R}{{\mathbb R}}
\newcommand{\T}{{\mathbb T}}
\newcommand{\U}{{\mathbb U}}
\newcommand{\V}{{\mathbb V}}
\newcommand{\W}{{\mathbb W}}
\newcommand{\X}{{\mathbb X}}
\newcommand{\Y}{{\mathbb Y}}
\newcommand{\Z}{{\mathbb Z}}


\newcommand{\be}{\begin{eqnarray}}
\newcommand{\ee}{\end{eqnarray}}
\newcommand{\nn}{{\nonumber}}
\newcommand{\dd}{\displaystyle}
\newcommand{\ra}{\rightarrow}
\newcommand{\bigmid}[1][12]{\mathrel{\left| \rule{0pt}{#1pt}\right.}}
\newcommand{\cl}{${\rm \ell}$}
\newcommand{\clp}{${\rm \ell^\prime}$}

\newcommand{\TODO}[1]
{\par\fbox{\begin{minipage}{0.9\linewidth}\textbf{TODO:} #1\end{minipage}}\par}


\mainmatter  

\title{Almost $p$-ary Sequences}

\titlerunning{Almost $p$-ary sequences}
%
%
\author{Büşra Özden \and O\u guz Yayla\footnote{Corresponding author}}
\authorrunning{Özden, Yayla}

\institute{Department of Mathematics, Hacettepe University\\Beytepe, 06800, Ankara, Turkey\\ \mailsa\\
\mailsb}

\toctitle{Lecture Notes in Computer Science}
\tocauthor{Authors' Instructions}
\maketitle
\begin{abstract}
		In this paper we study almost $p$-ary sequences and their autocorrelation coefficients.  We first study the number $\ell$ of distinct out-of-phase autocorrelation coefficients for an almost $p$-ary sequence of period $n+s$ with $s$ consecutive zero-symbols. We prove an upper bound and a lower bound on $\ell$. It is shown that $\ell$ can not be less than $\min\{s,p,n\}$. In particular, it is shown that a nearly perfect sequence with at least two consecutive zero symbols does not exist. Next we define a new difference set, partial direct product difference set (PDPDS), and we prove the connection between an almost $p$-ary nearly perfect sequence of type $(\gamma_1, \gamma_2)$ and period $n+2$ with two consecutive zero-symbols and a cyclic $(n+2,p,n,\frac{n-\gamma_2 - 2}{p}+\gamma_2,0,\frac{n-\gamma_1 -1}{p}+\gamma_1,\frac{n-\gamma_2 - 2}{p},\frac{n-\gamma_1 -1}{p})$ PDPDS for arbitrary integers $\gamma_1$ and $\gamma_2$. Then we prove a necessary condition on $\gamma_2$ for the existence of such sequences. In particular, we show that they don't exist for $\gamma_2 \leq -3$. 	
		\blfootnote{\textup{2010} \textit{Mathematics Subject Classification}: \textup{05B10, 94A55.}}
 
\keywords{almost $p$-ary sequence, nearly perfect sequence, direct product difference set}
	\end{abstract}
	
	\section{Introduction}
	Let $\zeta_p\in \C$ be a primitive $p$-th root of unity for some prime number $p$.
	A sequence $\underline{a} = (a_0,a_1,\ldots,a_{n-1},\ldots)$ of period $n$ with $a_i = \zeta_p ^{b_i}$ for some integer $b_i$, $i =0, 1, \ldots,n-1$ is called a \textit{$p$-ary sequence}. If $a_{i_j} = 0$ for all $j=1,2,\ldots , s$ where $\{ i_1,i_2, \ldots , i_s \} \subset \{0,1, \ldots , n-1\}$ and $a_i = \zeta_p^{b_i}$ for some integer $b_i$, $i \in  \{0,1, \ldots , n-1\} \backslash \{ i_1,i_2, \ldots , i_s \}$, then we call $\underline{a}$ an \textit{almost $p$-ary sequence with $s$ zero-symbols}. 
	For instance, $\underline{a}=(\zeta_3^3,\zeta_3^2,\zeta_3^4,\zeta_3^2,1,\ldots)$ is a $3$-ary sequence of period $5$ and  $\underline{a}=(0,\zeta_7^3,1,\zeta_7^3,0,0,\zeta_7^5,\zeta_7^6,\zeta_7^6,\zeta_7^5,,\ldots)$ is an almost $7$-ary sequence with 3 zero-symbols of period $10$.  It is widely used that a sequence with one zero-symbol is called an almost $p$-ary sequence. But in this paper we use this notation for a $p$-ary sequence with $s$ zero-symbols, for $s \geq 0$. 
	
	For a sequence $\underline{a}$ of period $n$, its \textit{autocorrelation 
		function} $C_{\underline{a}}(t)$ is defined as
	$$ C_{\underline{a}}(t) = \sum_{i=0}^{n-1}{a_i\overline{a_{i+t}}},$$ for $0\leq t \leq n-1$ where $\overline{a}$ is the complex conjugate of $a$.  The values $C_{\underline{a}}(t)$ at $1 \leq t \leq n-1$ are called \textit{the out-of-phase autocorrelation coefficients} of $\underline{a}$. Note that the autocorrelation function of $\underline{a}$ is periodic with $n$.
	
	We call an almost $p$-ary sequence $\underline{a}$ of period $n$ a \textit{nearly perfect
		sequence} (NPS) of type $(\gamma_1, \gamma_2)$ if all out-of-phase autocorrelation coefficients of $\underline{a}$ are either $\gamma_1$ or $\gamma_2$. We write \textit{NPS of type $\gamma$} to denote an NPS of type $(\gamma,\gamma)$. Moreover, a sequence is called \textit{perfect sequence} (PS) if it is an NPS of type $(0,0)$. We also note that there is another notion of \textit{almost perfect sequences} which is a $p$-ary sequence $\underline{a}$ of period $n$ having $C_{\underline{a}}(t)=0$ for all $1\leq t \leq n-1$ -with exactly one exception \cite{jungnickel1999perfect}.
	
	There are some applications of almost $p$-ary NPS of type $\gamma$ \cite{golomb2005signal,jungnickel1999perfect}. Therefore, nearly perfect sequences have been studied by many authors. Jungnickel and Pott \cite{jungnickel1999perfect} studied binary NPS of type $|\gamma|\leq2$. Ma and Ng \cite{ma2009non} obtained a relation between a $p$-ary NPS of type $|\gamma|\leq 1$ and a direct product difference set (DPDS) and obtained nonexistence on some $p$-ary NPS of type $|\gamma|\leq 1$ by using character theory. Later Chee et al. \cite{chee2010almost} extended the methods due to Ma and Ng \cite{ma2009non} to almost $p$-ary NPS of types $\gamma=0$ and $\gamma=-1$ with one zero-symbol. Then, Özbudak et al. \cite{ozbudak2012nonexistence} proved the nonexistence of almost $p$-ary NPS with one zero-symbol at certain values. Liu and Feng \cite{liu2016new} obtained new nonexistence results on  $p$-ary  PS  and related difference sets (RDS) by using some results on cyclotomic fields and their sub-fields. They also considered almost $p$-ary PS with one zero-symbol. Chang Lv \cite{lv2017non} obtained nonexistence of almost $p$-ary PS with $s\leq 1$ zero-symbol for $p\equiv 5 \mod 8$ (resp.~$ p\equiv 3\mod 4$) and period $p^a qn'$ (resp.~$p^a q^ln'$) by considering equations cyclotomic fields satisfied by perfect sequences. Niu et al.~\cite{niu2018non} studied a binary sequence of the periodical with a 2-level autocorrelation value, this solves three open problems given by Jungnickel and Pott \cite{jungnickel1999perfect}. Moreover, second author \cite{yayla2016nearly} proved an equality between a $p$-ary NPS of type $\gamma$  and a DPDS for an arbitrary integer $ \gamma $. In addition, he extended this result for an almost $p$-ary NPS with one zero-symbol, and  proved its nonexistence cases by self-conjugacy condition. Then,  a non existence result was proven for an almost $p$-ary NPS with $s\geq 2$ zero-symbols.
	
	In this paper, we study almost $p$-ary sequences and their some properties. We first prove some bounds on the number of  distinct out-of-phase autocorrelation coefficients of an almost $p$-ary  sequence of period $n+s$ with $s$ consecutive zero-symbols (see Theorem \ref{th:nbrg}). In particular, we prove that the number of distinct out-of-phase autocorrelation coefficients can not be less than $\min\{s,p,n\}$. Next, we define a new difference set called partial direct product difference set (see Definition \ref{df:pDPDS}) and prove that a $p$-ary NPS of type $(\gamma_1, \gamma_2)$ is equivalent to a PDPDS (see Theorem \ref{th:p-DPDS}). And, we show that they exist only if $p$ divides $n-\gamma_2-2$ and $n-\gamma_1-1$. Finally, we show a bound on $\gamma_2$  for the existence of an almost $p$-ary sequence of type $(\gamma_1, \gamma_2)$ with two consecutive zero-symbols (see Theorem \ref{thm:bound}). As a consequence of this result we show that such sequences don't exist if $\gamma_2 \leq -3$ (see Corollary \ref{cor:gamma2}).
	
	This paper is organized as follows. In Section \ref{sec.pre} some preliminary results are presented. Then we present some properties of the autocorrelation coefficients of an almost $p$-ary sequence in Section \ref{sec:autocor}.
	Then, we study the relation between an almost $p$-ary NPS and a partial direct product difference set in Section \ref{sec:pDPDS}. 
	
	\section{Preliminaries}
	\label{sec.pre}
	We first give the definition of a direct product difference set \cite{ma2009non}.
	
	\begin{definition}
		Let $G = H \times N$, where the order of $H$ and $N$ are $n$ and $m$. A subset $R$ of $G$, $|R| =k$, is called an $(n,m,k,\lambda_1,\lambda_2,\mu)$ direct product difference set (DPDS) in $G$ relative to $H$ and $N$ if 
		differences $r_1r_2^{-1}$, $r_1, r_2 \in R$ with $r_1 \neq r_2$ represent
		\begin{itemize}
			\item all non identity elements of $H$ exactly $\lambda_1$ times,
			\item all non identity elements of $N$ exactly $\lambda_2$ times,
			\item all non identity elements of $G \backslash H \cup N$ exactly $\mu$ times.
		\end{itemize}
	\end{definition}
	We can also define a difference set by using the group-ring algebra notation. 
	Let $\sum_{g \in R}{g} \in \Z[G]$ be an element of the group ring $\Z[G]$, for simplicity we will denote the sum by $R$. If $R$ is an $(m,n,k,\lambda_1,\lambda_2,\mu)$-DPDS in $G$ relative to $H$ and $N$ then
	\be \label{eqn:DPDS}
	RR^{(-1)} = (k - \lambda_1 - \lambda_2 + \mu) + (\lambda_1 - \mu)H + (\lambda_2 - \mu)N + \mu G
	\ee
	holds in $\Z[G]$.

	The following result on vanishing sums of roots of unity due to Lam and Leung \cite{LL2000}, see also \cite[Proposition 2.1]{winterhof2014non}.
	\begin{lemma} \cite{LL2000} \label{thm.LL} 
		Let $m$ be an integer with prime factorization $m = p_1^{a_1}p_2^{a_2}\ldots p_\ell^{a_\ell}$. If there are  
		$m$-th roots of unity $\xi_1,\xi_2,\ldots,\xi_v$ with $\xi_1+\xi_2+\ldots+\xi_v = 0$, then $v = p_1t_1+p_2t_2+\ldots+p_\ell t_\ell$ with non-negative integers $t_1,t_2,\ldots,t_\ell$.
	\end{lemma}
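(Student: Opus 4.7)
The plan is to prove this by induction on $\Omega(m) = a_1 + a_2 + \cdots + a_\ell$, the number of prime factors of $m$ counted with multiplicity.

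For the base case $\Omega(m) = 1$, we have $m = p$ prime. Each $\xi_i$ will be $\zeta_p^{c_i}$ for some $c_i \in \{0, 1, \ldots, p-1\}$; after grouping by exponent with multiplicities $n_0, n_1, \ldots, n_{p-1}$, the relation $\sum_j n_j \zeta_p^j = 0$ combined with the fact that $\Phi_p(x) = 1 + x + \cdots + x^{p-1}$ is the minimal polynomial of $\zeta_p$ will force $\sum_j n_j x^j$ to be an integer multiple of $\Phi_p(x)$, so that $n_0 = n_1 = \cdots = n_{p-1} = t$ for some $t \geq 0$ and $v = pt$.

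For the inductive step I would fix a prime $p \mid m$, set $n = m/p$, and partition the $v$ summands by cosets of the subgroup $\langle \zeta_p \rangle \leq \mu_m$, where $\zeta_p = \zeta_m^{m/p}$. Choosing a transversal $\{\eta_1, \ldots, \eta_n\}$ and writing each term as $\eta_k \zeta_p^j$, the relation becomes $\sum_{k=1}^n \eta_k \bigl( \sum_{j=0}^{p-1} c_{k,j} \zeta_p^j \bigr) = 0$ with $c_{k,j} \geq 0$. Using the identity $1 + \zeta_p + \cdots + \zeta_p^{p-1} = 0$, I would subtract $u_k \eta_k(1 + \zeta_p + \cdots + \zeta_p^{p-1})$ from each coset's contribution with $u_k = \min_j c_{k,j}$; this preserves the sum while stripping away $p u_k$ summands per coset, leading to a decomposition $v = p \sum_k u_k + v'$ where $v'$ is the length of the residual vanishing sum.

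The main obstacle will be arguing that the residual relation of length $v'$, in which each coset is now missing at least one power of $\zeta_p$, can itself be interpreted as a vanishing sum of roots of unity of order dividing a proper divisor $m'$ of $m$ with $\Omega(m') < \Omega(m)$, so that the induction hypothesis applies and delivers $v' = p_1 t'_1 + \cdots + p_\ell t'_\ell$. I expect this step to require a careful structural argument exploiting the tower $\Q(\zeta_n) \subset \Q(\zeta_m)$ and the minimal polynomial of $\zeta_m$ over $\Q(\zeta_n)$---or, equivalently, a polynomial identity in $\Z[x]$ modulo $\Phi_m(x)$ that isolates a sub-relation already living in $\Z[\zeta_{m'}]$. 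Once this reduction is in place, combining the $p \sum_k u_k$ contribution from the extracted $\Phi_p$-blocks with the inductive decomposition of $v'$ will yield the desired representation $v = p_1 t_1 + \cdots + p_\ell t_\ell$ with non-negative integers $t_i$.
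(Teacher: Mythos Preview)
The paper does not supply a proof of this lemma; it is quoted from Lam--Leung with a bare citation, so there is no in-paper argument to compare your attempt against.

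On your outline itself: the base case is correct, and peeling off full $\Phi_p$-blocks from each $\langle\zeta_p\rangle$-coset is the natural first move. The genuine gap lies exactly where you flag it. After stripping, the residual
\[
\sum_{k}\eta_k\sum_{j=0}^{p-1}c'_{k,j}\,\zeta_p^{\,j}=0,\qquad \min_{j}c'_{k,j}=0\ \text{for every }k,
\]
is still a vanishing sum of $m$-th roots of unity; nothing so far pushes it down into $\mu_{m'}$ for any proper divisor $m'$. The vanishing index $j$ varies with $k$, so you cannot delete a single basis element $\zeta_p^{j_0}$ uniformly and read off non-negative relations in $\Z[\zeta_n]$. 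If instead you use $\zeta_p^{p-1}=-(1+\zeta_p+\cdots+\zeta_p^{p-2})$ and compare coefficients over the $\Q(\zeta_n)$-basis $\{1,\zeta_p,\ldots,\zeta_p^{p-2}\}$, you obtain identities $\sum_k(c'_{k,j}-c'_{k,p-1})\eta_k=0$ whose coefficients carry both signs: these are $\Z$-linear relations among $n$-th roots of unity, not vanishing sums with non-negative multiplicities, so your induction hypothesis (stated for vanishing sums of $m'$-th roots with $\Omega(m')<\Omega(m)$) does not apply to them.

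Turning those signed relations back into an honest instance of the original problem is precisely the substantive content of the Lam--Leung argument; it is not a routine bookkeeping step, and for $m$ with three or more prime factors the residual need not decompose into rotated $\Phi_q$-blocks in any obvious way. As written, your proposal carries out the easy half (extracting $\Phi_p$-blocks) and defers the hard half to an ``expected'' structural argument that is, in fact, the whole theorem.
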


	We now give the relation between an NPS and a DPDS. Let p be a prime, $n \geq 2$ be an integer, and $\underline{a} = (a_0,a_1,\ldots,a_{n},\ldots)$ be an almost $p$-ary sequence of period $n+s$ with $s$ zero-symbol such that $a_{i_j} = 0$ for all $j=1,2,\ldots , s$ where $\{ i_1,i_2, \ldots , i_s \} \subset \{0,1, \ldots , n+s-1\}$.
	Let $H = \langle h \rangle$ and $P = \langle g \rangle $ be the (multiplicatively written) cyclic groups of order $n+s$ and $p$. Let $G$ be the group defined as $G=H \times P$.
	We choose a primitive $p$-th root of 1, $\zeta_p \in \C$. For $i \in  \{0,1, \ldots , n+s-1\} \backslash \{ i_1,i_2, \ldots , i_s \}$ let $b_i$ be the integer in $\{0,1,2,\ldots,p-1\}$ such that $a_i = \zeta_p^{b_i}$. 
	Let $R_a$ be the subset of $G$ defined as
	\be \label{eqn:NPS2DPDS}
	R_a = \{(g^{b_i}h^i) \in G : i \in  \{0,1, \ldots , n+s-1\} \backslash \{ i_1,i_2, \ldots , i_s \}\}.
	\ee
	
	In the following we present a known result between an almost $p$-ary sequence of type $\gamma$ with one zero-symbol  and a DPDS for an integer $\gamma$.  
	
	\begin{theorem}\cite{yayla2016nearly} \label{th:DPDS:almost}
		$\underline{a}$ is an almost $p$-ary NPS of period $n+1$ and type $\gamma$ with one zero-symbol if and only if $R_a$ defined as in \eqref{eqn:NPS2DPDS} is an $(n+1,p,n,\frac{n-\gamma-1}{p}+\gamma,0,\frac{n-\gamma-1}{p})$-DPDS in $G$ relative to $H$ and $P$. In particular, $p$ divides $n-\gamma-1$.
	\end{theorem}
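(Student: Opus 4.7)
The plan is to express the defining identity \eqref{eqn:DPDS} for a DPDS in $\Z[G]$ coefficient by coefficient, compute those coefficients directly from the sequence, and match. For each $(c,t)\in\{0,\dots,p-1\}\times\{0,\dots,n\}$ let $\alpha_{c,t}$ denote the coefficient of $g^c h^t$ in $R_a R_a^{(-1)}$, so that
$$\alpha_{c,t}=\#\bigl\{(i,j)\in(\{0,\dots,n\}\setminus\{i_1\})^2 : i-j\equiv t\pmod{n+1},\ b_i-b_j\equiv c\pmod{p}\bigr\}.$$
The case $t=0$ forces $i=j$, giving immediately $\alpha_{0,0}=n=k$ and $\alpha_{c,0}=0$ for $c\neq 0$, which matches the required values $k=n$ and $\lambda_2=0$. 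For $t\neq 0$ the pair is parameterised by $j\in\{0,\dots,n\}\setminus\{i_1,i_1-t\}$, hence $\sum_{c}\alpha_{c,t}=n-1$, and by construction
$$\sum_{c=0}^{p-1}\alpha_{c,t}\,\zeta_p^{c}=\sum_{j\neq i_1,\,i_1-t}\zeta_p^{b_{j+t}-b_j}=\overline{C_{\underline a}(-t)}.$$

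For the forward direction, assume $\underline a$ is an NPS of type $\gamma$, so the right-hand side above equals the integer $\gamma$ for every $t\neq 0$. Using $\zeta_p^{p-1}=-1-\zeta_p-\dots-\zeta_p^{p-2}$ and the fact that $1,\zeta_p,\dots,\zeta_p^{p-2}$ form a $\Q$-basis of $\Q(\zeta_p)$, equating coefficients in
$$\sum_{c=0}^{p-1}\alpha_{c,t}\,\zeta_p^{c}=\gamma\in\Z$$
forces $\alpha_{1,t}=\alpha_{2,t}=\dots=\alpha_{p-1,t}=:\mu_t$ and $\alpha_{0,t}-\mu_t=\gamma$. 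Combining this with $\sum_c\alpha_{c,t}=n-1$ yields $p\mu_t+\gamma=n-1$, so $\mu_t=(n-\gamma-1)/p$ is the \emph{same} integer $\mu$ for every $t\neq 0$, and in particular $p\mid n-\gamma-1$. Substituting the resulting coefficients $\alpha_{0,0}=n$, $\alpha_{c,0}=0$, $\alpha_{0,t}=\mu+\gamma$, $\alpha_{c,t}=\mu$ back into \eqref{eqn:DPDS} verifies that $R_a$ is an $(n+1,p,n,\mu+\gamma,0,\mu)$-DPDS.

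Conversely, if $R_a$ satisfies \eqref{eqn:DPDS} with the stated parameters, then reading off the coefficient of $g^c h^t$ on both sides gives $\alpha_{0,t}=\lambda_1=\tfrac{n-\gamma-1}{p}+\gamma$ and $\alpha_{c,t}=\mu=\tfrac{n-\gamma-1}{p}$ for $c\neq 0$, $t\neq 0$. Hence
$$C_{\underline a}(t)=\sum_{c=0}^{p-1}\alpha_{c,-t}\,\zeta_p^{-c}=\lambda_1+\mu\sum_{c=1}^{p-1}\zeta_p^{-c}=\lambda_1-\mu=\gamma$$
for every $t\neq 0$, so $\underline a$ is an NPS of type $\gamma$.

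The only genuinely non-routine step is the linear-algebra argument in the forward direction, where one must show that the integer multiplicities $\alpha_{c,t}$ at a fixed $t\neq 0$ are forced to coincide for all $c\neq 0$; this could alternatively be deduced from Lemma \ref{thm.LL}, but the direct $\Q$-basis argument is cleanest. Everything else is bookkeeping that translates the $\Z[G]$-identity \eqref{eqn:DPDS} into individual coefficients.
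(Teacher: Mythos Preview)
Your argument is correct. One cosmetic slip: the displayed sum $\sum_{c}\alpha_{c,t}\zeta_p^{c}=\sum_{j}\zeta_p^{b_{j+t}-b_j}$ equals $C_{\underline a}(-t)$, not $\overline{C_{\underline a}(-t)}$; since $\gamma$ is an integer this has no effect on the proof, but you may want to drop the conjugation bar.

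The paper itself does not prove this statement: it is quoted from \cite{yayla2016nearly}. The closest in-paper analogue is the proof of Theorem~\ref{th:p-DPDS}, which handles the two-zero-symbol case and proceeds via character theory: one applies every character $\chi$ of $G=H\times P$ to $RR^{(-1)}$, splits into four cases according to whether $\chi$ is principal on $P$ and on $H$, recognises $\chi(R)$ as a Galois conjugate $A^\sigma$ of the sequence polynomial, and then inverts the character table to recover the group-ring element. Your route is genuinely different and more elementary: you stay entirely inside $\Z[G]$, read off the coefficients $\alpha_{c,t}$ directly, and use only the $\Q$-linear independence of $1,\zeta_p,\dots,\zeta_p^{p-2}$ to force $\alpha_{1,t}=\dots=\alpha_{p-1,t}$. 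This avoids characters and Galois automorphisms altogether, and as a by-product the integrality of $\gamma$ and the divisibility $p\mid n-\gamma-1$ fall out of the same linear-algebra step rather than appearing as a separate observation. The character-theoretic framing, on the other hand, scales more transparently to the PDPDS situation of Theorem~\ref{th:p-DPDS}, where two autocorrelation values $\gamma_1,\gamma_2$ and the extra terms supported on $\{h,h^{n+1}\}$ make a case split by character type cleaner than tracking individual coefficients.
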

	
	\section{Autocorrelation Coefficients}
	\label{sec:autocor}
	
	In this section we use the notation of the previous section. We first give an extension of a well known divisibility result on difference sets whose proof follows by counting the number of elements in the difference table.

	\begin{proposition} \label{prop:counting}
		Let $\underline{a}$ is a sequence of period $n+s$ with $s$  zero-symbol. Let $R_a$ be a $(n+s,p,n,\lambda_1,\lambda_2,\mu)$-DPDS. Then  $(n+s-1)(\lambda_1 + \mu(p-1)) = n^2-n$.
	\end{proposition}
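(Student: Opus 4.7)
The plan is to prove the identity by a simple double-counting of the multiset of non-trivial differences $r_1 r_2^{-1}$ arising from $R_a$, together with the structural observation that one of the three DPDS parameters is automatically zero for any $R_a$ produced from a sequence via \eqref{eqn:NPS2DPDS}.

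First I would verify that $\lambda_2 = 0$. By construction, the element of $R_a$ associated with a non-zero position $i$ has $H$-part equal to $h^i$, so distinct elements of $R_a$ sit in distinct $H$-cosets $h^i P$ of $P$ in $G$. Consequently, if $r_1 r_2^{-1} \in N = P$ for some $r_1, r_2 \in R_a$, then the $H$-parts of $r_1$ and $r_2$ must coincide, which forces $r_1 = r_2$. Hence no non-identity element of $N$ occurs as a non-trivial difference, so the DPDS parameter $\lambda_2$ must equal $0$.

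Next I would carry out the count. On one hand, the multiset of ordered differences $r_1 r_2^{-1}$ with $r_1 \neq r_2$ has total size $|R_a|^2 - |R_a| = n^2 - n$. On the other hand, the DPDS definition partitions this multiset into the three classes of non-identity elements of $G$, whose cardinalities are $|H|-1 = n+s-1$, $|N|-1 = p-1$, and, by inclusion--exclusion, $|G| - |H| - |N| + 1 = (n+s-1)(p-1)$ respectively. Adding the contributions $\lambda_1$, $\lambda_2$, $\mu$ from each class gives
$$(n+s-1)\lambda_1 + (p-1)\lambda_2 + (n+s-1)(p-1)\mu = n^2 - n.$$
Substituting $\lambda_2 = 0$ from the previous step and factoring yields $(n+s-1)(\lambda_1 + (p-1)\mu) = n^2 - n$.

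The only step that requires any thought is the vanishing of $\lambda_2$, which is immediate from the one-per-coset structure of $R_a$; after that, the identity is pure bookkeeping and no further obstacle appears.
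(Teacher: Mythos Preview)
Your proof is correct and follows essentially the same double-counting argument as the paper: both count the $n^2-n$ non-trivial differences and match them against the DPDS parameters, with the $\lambda_2$ contribution dropping out. The paper's proof simply omits the $\lambda_2$ term without comment, whereas you make the vanishing of $\lambda_2$ explicit via the one-per-coset structure of $R_a$, which is a welcome clarification.
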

	\begin{proof}
		We know that there are $n^2-n$ nonidentity elements in the difference table of $R_a$. They correspond to $\lambda_1$ times nonidentity elements of $\Z_{n+s}\times \{0\} $ and $\mu$ times nonidentity elements of $\Z_{n+s}\times \Z_p \backslash (\Z_{n+s}\times \{0\} \cup \{0\} \times \Z_{p})$.   Hence the result follows.
	\end{proof}\\
	
	It is now clear that $R_a$ is not a DPDS if $(n+s-1) \nmid n^2-n$.
	If the zero-symbols are  consecutive we have more than the divisibility condition. The proof of the following result follows similarly.
	\begin{proposition} \label{prop:cons}
		Let $\underline{a}$ is a sequence of period $n+s$ with $s \geq 1$ consecutive zero-symbol. Let $R_a$ be a $(n+s,p,n,\lambda_1,\lambda_2,\mu)$-DPDS. Then  $n-i=\lambda_1+\mu(p-1)$ for $i=1,2,\ldots,s$, and so $s=1$. In addition, if $s=0$ then $n=\lambda_1+\mu(p-1)$ holds.
	\end{proposition}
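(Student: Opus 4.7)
The plan is to extend the counting argument of Proposition~\ref{prop:counting} by isolating the contribution of each individual shift $t \in \{1, 2, \ldots, s\}$, rather than summing over all non-identity group elements at once. After a cyclic shift of the index set, which only multiplies $R_a$ by a power of $h$ and therefore preserves the DPDS structure, I will assume the $s$ consecutive zero-symbols occupy positions $\{0, 1, \ldots, s-1\}$, so that the set of nonzero positions is $J = \{s, s+1, \ldots, n+s-1\}$ with $|J| = n$.

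For each fixed $t$ with $1 \leq t \leq s$, I will count in two ways the ordered pairs $(r_1, r_2) \in R_a \times R_a$, $r_1 \neq r_2$, whose quotient $r_1 r_2^{-1}$ has $H$-component equal to $h^t$. On the one hand, the DPDS property gives that $h^t \in H \setminus \{e\}$ is represented exactly $\lambda_1$ times, while each of the $p-1$ elements $h^t g^k$ with $k \in \{1, \ldots, p-1\}$ lies in $G \setminus (H \cup N)$ and is represented exactly $\mu$ times, for a total of $\lambda_1 + (p-1)\mu$ such pairs. On the other hand, using $R_a = \{g^{b_i} h^i : i \in J\}$, these same pairs are in bijection with ordered pairs $(i, i') \in J \times J$ satisfying $i \equiv i' + t \pmod{n+s}$; a short enumeration shows that exactly the $t$ positions $i' \in \{n+s-t, n+s-t+1, \ldots, n+s-1\}$ are shifted into the zero block $\{0, \ldots, s-1\}$, while the remaining $n - t$ admissible $i' \in J$ all produce partners $i \in J$.

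Equating the two expressions yields $\lambda_1 + (p-1)\mu = n - t$ for every $t \in \{1, 2, \ldots, s\}$. Since the left-hand side is independent of $t$, this forces $n-1 = n-2 = \cdots = n-s$, which is only possible when $s = 1$. The case $s = 0$ is an immediate corollary of the same two-way count: with no zero-symbols, every shift produces exactly $n$ ordered pairs, yielding $\lambda_1 + (p-1)\mu = n$ directly.

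The main obstacle I anticipate is the combinatorial book-keeping in the direct count, specifically verifying that all $t$ "bad" values of $i'$ genuinely lie in $J$ (so that no inclusion-exclusion correction from pairs with both endpoints in the zero block is required) and that the block of shifted positions stays inside $\{0, \ldots, s-1\}$ without wrap-around; both reduce to the mild assumption $n \geq s$, which is built into the setup of a nontrivial DPDS with these parameters. The remainder is routine group-ring manipulation following the template of Proposition~\ref{prop:counting}.
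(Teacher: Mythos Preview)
Your proof is correct and follows essentially the same approach as the paper's: both arguments fix a shift $t \in \{1,\ldots,s\}$ and count the differences $r_1 r_2^{-1}$ whose $H$-component equals $h^t$ in two ways (the paper phrases this as ``checking the sub-diagonal entries in the difference table''), obtaining $n-t = \lambda_1 + (p-1)\mu$ for each such $t$ and hence $s=1$. Your write-up is considerably more explicit about the bijection with index pairs $(i,i')$ and the wrap-around bookkeeping, whereas the paper's proof is a two-sentence sketch of the same idea.
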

	\begin{proof}
		If $R$ is a $(n+s,p,n,\lambda_1,\lambda_2,\mu)$-DPDS, then by checking the sub-diagonal entries in difference table we have $n-i=\lambda_1+\mu(p-1)$ for i=1,2,\ldots,s. Thus, right hand side of this equation is fixed and so this can only hold for one index $i$, i.e.~$s=1$. The later statement of the theorem holds similarly.
	\end{proof}
	
	\begin{example}
		Let $\underline{a}=(0,\zeta_3^2,\zeta_3^2,\zeta_3^2,1,\zeta_3^2,\zeta_3, \zeta_3,\zeta_3^2,1,\zeta_3^2,\zeta_3^2,\zeta_3^2)$ be a $3$-ary NPS of period $13$ and $s=1$. Here we have $\lambda_1=5$ and $\mu=3$, then Proposition \ref{prop:cons} is satisfied. Similarly, let $\underline{a}=(\zeta_3^2,\zeta_3^2,\zeta_3^2,\zeta_3^2,1)$ be a $3$-ary NPS of period $5$ and $s=0$ In this case we have $\lambda_1=3$ and $\mu=1$, then Proposition \ref{prop:cons}  is also satisfied. 
	\end{example}

	Now we show that if $s \geq 2$ there does not exist a nearly perfect sequence with only one out-of-phase autocorrelation coefficient. Before that we give the following lemma. 
	
	\begin{lemma}\label{lem:equiv} 
		The number of congruence classes in a set $\{1,2,\ldots,s\}$ modulo some prime $p \leq s$ is equivalent to $p$.
	\end{lemma}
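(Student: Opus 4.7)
The plan is to argue directly from the inclusion $\{1,2,\ldots,p\}\subseteq\{1,2,\ldots,s\}$, which is valid precisely because $p\le s$. First I would observe that modulo $p$ there are only $p$ congruence classes in total, so the number of classes represented by any subset of $\mathbb{Z}$ is at most $p$; it therefore suffices to exhibit representatives of all $p$ classes within $\{1,2,\ldots,s\}$.

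Next, I would use the $p$ consecutive integers $1,2,\ldots,p$, which lie in $\{1,2,\ldots,s\}$ by hypothesis. Their residues modulo $p$ are $1,2,\ldots,p-1,0$, hitting every element of $\mathbb{Z}/p\mathbb{Z}$ exactly once. Hence all $p$ congruence classes are represented, and combined with the upper bound of $p$ classes this gives equality.

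The main step is essentially a one-line pigeonhole/consecutive-integers argument, so there is no genuine obstacle; the only thing to be careful about is interpreting \emph{equivalent to} in the statement as \emph{equal to}, which is clearly the intended meaning given the way the lemma will be used to count residue classes covered by the indices of the $s$ consecutive zero-symbols in the sequel.
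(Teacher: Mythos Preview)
Your argument is correct. The paper does not actually supply a proof of this lemma; it is stated without justification, presumably because the authors regard it as self-evident. Your direct argument---noting that there are at most $p$ residue classes modulo $p$, and that the $p$ consecutive integers $1,2,\ldots,p\subseteq\{1,2,\ldots,s\}$ realize all of them---is a complete and standard proof, and your reading of ``equivalent to'' as ``equal to'' is certainly the intended meaning.
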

	
	\begin{theorem} \label{th:nbrg}
		Let $\underline{a}$ be an almost $p$-ary sequence of period $n+s$ with $s$ consecutive zero-symbols. Let $\ell$ be the number of distinct elements in the set $\{C_{\underline{a}}(1), C_{\underline{a}}(2), \ldots, C_{\underline{a}}(n+s-1) \}$. Then, $\min\{s,p,n\} \leq \ell \leq n-1+\min\{n,s\} $.
	\end{theorem}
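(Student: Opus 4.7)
The plan is to choose coordinates so the $s$ consecutive zero-symbols occupy positions $0,1,\ldots,s-1$, set $N=n+s$, and write $A=\{s,s+1,\ldots,N-1\}$ for the support of $\underline{a}$. Only pairs $(i,i+t)$ with both indices in $A$ contribute, so
\[
C_{\underline{a}}(t)=\sum_{i\in A\cap(A-t)}\zeta_p^{b_i-b_{i+t}}
\]
is a sum of exactly $k(t):=|A\cap(A-t \bmod N)|$ $p$-th roots of unity. A routine case split (wrap vs.\ no-wrap in the cyclic group $\Z_N$) yields the following description of $k(t)$. When $s<n$, one has $k(t)=n-t$ on $[1,s]$, $k(t)=n-s$ on $[s,n-1]$, and $k(t)=t-s$ on $[n,N-1]$. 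When $s\geq n$, one has $k(t)=n-t$ on $[1,n-1]$, $k(t)=0$ on $[n,s]$, and $k(t)=t-s$ on $[s+1,N-1]$. In both cases the set $\{k(t):1\leq t\leq N-1\}$ is exactly $\min(s,n)$ consecutive integers: $\{n-s,\ldots,n-1\}$ if $s<n$, and $\{0,1,\ldots,n-1\}$ if $s\geq n$.

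For the upper bound, the case $s\leq n$ is immediate since $\ell$ is at most the total number of out-of-phase coefficients $n+s-1=(n-1)+s$. When $s\geq n$, the crucial point from the case analysis is that $C_{\underline{a}}(t)=0$ for every $t\in[n,s]$, because the support $A$ and its shift $A-t$ are disjoint in $\Z_N$ for such $t$. These $s-n+1$ coefficients collapse into the single value $0$, while the remaining $2(n-1)$ indices in $[1,n-1]\cup[s+1,N-1]$ contribute at most $2(n-1)$ additional distinct values, so $\ell\leq 2n-1=(n-1)+\min(s,n)$.

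For the lower bound, expand $C_{\underline{a}}(t)=\sum_{j=0}^{p-1}c_{t,j}\zeta_p^j$ where $c_{t,j}$ counts the contributing $i$ with $b_i-b_{i+t}\equiv j\pmod p$, so $\sum_j c_{t,j}=k(t)$. Since $\{1,\zeta_p,\ldots,\zeta_p^{p-2}\}$ is a $\Q$-basis of $\Q(\zeta_p)$, the only $\Z$-linear relations among $1,\zeta_p,\ldots,\zeta_p^{p-1}$ are scalar multiples of $\sum_{j=0}^{p-1}\zeta_p^j=0$; hence $C_{\underline{a}}(t_1)=C_{\underline{a}}(t_2)$ forces the vector $(c_{t_1,j}-c_{t_2,j})_{j=0}^{p-1}$ to be constant in $j$, and summing that constant yields $p\mid k(t_1)-k(t_2)$. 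Therefore $\ell$ is at least the number of residue classes modulo $p$ represented in $\{k(t):1\leq t\leq N-1\}$, and since this set consists of $\min(s,n)$ consecutive integers, Lemma \ref{lem:equiv} gives $\min(p,\min(s,n))=\min(s,p,n)$ classes.

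The main obstacle is the bookkeeping for $k(t)$: everything hinges on knowing precisely which integer counts occur in which range of $t$, and in particular on verifying that $C_{\underline{a}}(t)$ vanishes identically on $[n,s]$ when $s\geq n$. Once this combinatorial description of $k(t)$ is in hand, both bounds follow cleanly from the single algebraic observation $C_{\underline{a}}(t_1)=C_{\underline{a}}(t_2)\Rightarrow p\mid k(t_1)-k(t_2)$.
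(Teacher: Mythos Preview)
Your argument is correct and follows the same route as the paper: both compute the term-count $k(t)$, identify $\{k(t):1\le t\le n+s-1\}$ as $\min(s,n)$ consecutive integers, obtain the upper bound by the resulting collapse (in particular $C_{\underline{a}}(t)=0$ on $[n,s]$ when $s\ge n$), and obtain the lower bound from the implication $C_{\underline{a}}(t_1)=C_{\underline{a}}(t_2)\Rightarrow p\mid k(t_1)-k(t_2)$ together with Lemma~\ref{lem:equiv}. The one substantive difference is in how that implication is proved: the paper rewrites $C_{\underline{a}}(i)-C_{\underline{a}}(j)$ as a vanishing sum of $p(n-j)+(j-i)$ many $p$-th roots of unity and appeals to the Lam--Leung result (Lemma~\ref{thm.LL}), whereas you argue directly from the fact that the only integer relation among $1,\zeta_p,\ldots,\zeta_p^{p-1}$ is a multiple of $\sum_j\zeta_p^j=0$; for a single prime $p$ your version is more elementary and self-contained.
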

	
	\begin{proof}
		We first consider the case $n>s$. Let $B=\{C_{\underline{a}}(1),C_{\underline{a}}(2),\ldots,C_{\underline{a}}(s),\ldots,\linebreak C_{\underline{a}}(n),\ldots ,C_{\underline{a}}(n+s-2),C_{\underline{a}}(n+s-1) \}$. Then we have
		\begin{equation*}
		\begin{aligned}
		B=\{ a_s\overline{a}_{s+1}+a_{s+1}\overline{a}_{s+2}+\ldots +a_{n+s-2}\overline{a}_{n+s-1}, \\
		a_s\overline{a}_{s+2}+a_{s+1}\overline{a}_{s+3}+\ldots +a_{n+s-3}\overline{a}_{n+s-1},\\
		\ldots \\ 
		a_s\overline{a}_{2s}+a_{s+1}\overline{a}_{2s+1}+\ldots +a_{n-1}\overline{a}_{n+1}, \\
		\ldots \\ 
		a_{2s}\overline{a}_{s}+a_{2s+1}\overline{a}_{s+1}+\ldots +a_{n+1}\overline{a}_{n-1}, \\
		\ldots \\ 
		a_{s+2}\overline{a}_s+a_{s+3}\overline{a}_{s+1}+\ldots +a_{n+s-1}\overline{a}_{n+s-3}, \\
		a_{s+1}\overline{a}_{s}+a_{s+2}\overline{a}_{s+1}+\ldots +a_{n+s-1}\overline{a}_{n+s-2} \}.
		\end{aligned}
		\end{equation*}
	    And let $\ell$ be the number of distinct elements in $B$. If all the elements in $B$ are distinct, then maximum value of $\ell$ is $\ell_{max}=\#B=n+s-1$. On the other hand, $C_{\underline{a}}(i)$ is the sum of $n-i$ elements for $i=1,2,\ldots, s$; $C_{\underline{a}}(i)$ is the sum of $n-s$ elements for $i=s+1,s+2,\ldots, n$ and $C_{\underline{a}}(i)$ is the sum of $i-s$ elements for $i=n+1,n+2,\ldots, n+s-1$. We note that the number of summands in $C_{\underline{a}}(i)$ equals to the number of summands in $C_{\underline{a}}(n+s-i)$ for $i=1,2,\ldots,s$ and the number of summands in $C_{\underline{a}}(i)$ equals to the number of summands in $C_{\underline{a}}(s)$ for $i=s+1,s+2,\ldots, n$. Thus, we can decide the maximum value of $\ell$ by checking the equality of $C_{\underline{a}}(t)$ values for $t\in\{1,2,\ldots ,s\}$. 
		If $C_{\underline{a}}(1)=C_{\underline{a}}(2)$ then $C_{\underline{a}}(1)-C_{\underline{a}}(2)=0$, that is $$a_s\overline{a}_{s+1}+a_{s+1}\overline{a}_{s+2}+\ldots +a_{n+s-2}\overline{a}_{n+s-1}-(a_s\overline{a}_{s+2}+\ldots +a_{n+s-3}\overline{a}_{n+s-1})=0.$$ 
		Then we have, 
		$$\zeta_p ^{b_s-b_{s+1}}+\zeta_p ^{b_{s+1}-b_{s+2}}+\ldots +\zeta_p ^{b_{n+s-2}-b_{n+s-1}}-(\zeta_p ^{b_s-b_{s+2}}+\ldots +\zeta_p ^{b_{n+s-3}-b_{n+s-1}})=0$$ 
		where $a_i = \zeta_p^{b_i}$ for some integer $b_i$, so 
		\be\nn
		\begin{array}{lr}
			\zeta_p ^{b_s-b_{s+1}}+\zeta_p ^{b_{s+1}-b_{s+2}}+\ldots +\zeta_p ^{b_{n+s-2}-b_{n+s-1}}+\\
			(\zeta_p ^{p-1}+\zeta_p ^{p-2}+\ldots+\zeta_p)(\zeta_p ^{b_s-b_{s+2}}+\ldots +\zeta_p ^{b_{n+s-3}-b_{n+s-1}})=0.
		\end{array} 
		\ee
		Hence we get that $n-1+(p-1)(n-2)=p(n-2)+1$ number of $p$-th root of unities sum up to zero. 
		Similarly,  the number of $p$-th root of unities in difference $$C_{\underline{a}}(i)-C_{\underline{a}}(j)$$ is $p(n-j)+j-i$ for $j>i$. By Lemma \ref{thm.LL}, the above equation vanishes only if $p|j-i$, that is $i \equiv j \mod p$. 
		If $s>p$ then we have at least $p$ distinct equivalence classes in the set  $\{1,2,\ldots,s\}$ modulo $p$ by Lemma \ref{lem:equiv}, and so $\ell _{min} =p$. If $p\geq s$ then $p \nmid j-i$ for distinct $i,j \in \{1,2,\ldots,s\}$, and so we get $\ell _{min} = s$. 
		
		Similarly, in the case of $s\geq n$, $\ell_{max}=n-1+1+n-1=2n-1$, $\ell _{min}=p$ for $n>p$ and $\ell _{min}=n$ for $p\geq n$.
	\end{proof}
	
	\begin{example}
		For almost 3-ary sequences $\underline{a_1}=(0,0,\zeta_3,\zeta_3,\zeta_3,\zeta_3)$, $\underline{a_2}=(0,0,\zeta_3^2,\linebreak\zeta_3,\zeta_3,\zeta_3^2)$, $\underline{a_3}=(0,0,\zeta_3,1,\zeta_3,\zeta_3)$ and $\underline{a_4}=(0,0,\zeta_3^2,\zeta_3^2,1,1)$, the number of distinct autocorrelation coefficients satisfies $\ell=2,3,4,5$ respectively. Here we have $s=2$, $n=4$, $p=3$. So Theorem \ref{th:nbrg} is satisfied, i.e.~$min\{2,4,3\}\leq\ell\leq 4-1+min\{2,4\}$.
	\end{example}

	\begin{example} Almost 3-ary sequences $\underline{a}_1=(1,0,0,1,0,1,1)$ , $\underline{a}_2=(\zeta_3,0,0,\zeta_3,0,\linebreak\zeta_3,\zeta_3)$ and $\underline{a}_3=(\zeta_3^2,0,0,\zeta_3^2,0,\zeta_3^2,\zeta_3^2)$ are NPS of type (2,2) and period 7 with 3 zero-symbols. Hence, Theorem \ref{th:nbrg} does not hold for almost $p$-ary sequences with non consecutive zero-symbols. \end{example} 
	
	Now we give a direct consequence of Theorem \ref{th:nbrg}, which says that one can not get an NPS of type $\gamma$ by adding extra zero-symbols at consecutive positions. 
	
	\begin{corollary} \label{crl:ntNPS}
		For $n\in \Z^{+}$, a prime number $p$, $s\geq 2$ and $\gamma \in Z$, there does not exist an almost $p$-ary NPS of type $\gamma$ and period $n+s$ with $s$ consecutive zero-symbols.
	\end{corollary}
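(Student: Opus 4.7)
The plan is to derive this corollary as an immediate consequence of Theorem \ref{th:nbrg}. An NPS of type $\gamma$ is, by the definition recalled in the introduction, an NPS of type $(\gamma,\gamma)$, meaning that all out-of-phase autocorrelation coefficients take the single value $\gamma$. Thus the number $\ell$ of distinct elements in the set $\{C_{\underline{a}}(1),C_{\underline{a}}(2),\ldots,C_{\underline{a}}(n+s-1)\}$ equals $1$.

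Next I would apply the lower bound half of Theorem \ref{th:nbrg}, which states that $\ell \geq \min\{s,p,n\}$ whenever $\underline{a}$ has $s$ \emph{consecutive} zero-symbols (this is precisely the hypothesis of the corollary). Under the standing hypotheses $s \geq 2$ and $p$ prime (so $p \geq 2$), together with the convention $n \geq 2$ used throughout the paper, we have $\min\{s,p,n\} \geq 2$. Therefore $\ell \geq 2$, which contradicts $\ell = 1$ obtained above.

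The resulting contradiction rules out the existence of such a sequence, establishing Corollary \ref{crl:ntNPS}. There is no substantial obstacle here; the corollary is essentially a restatement of the lower bound in Theorem \ref{th:nbrg} in the special case where only one autocorrelation value is permitted, and the only point worth flagging is that the bound genuinely requires the zero-symbols to be consecutive, as the preceding example with non-consecutive zeros demonstrates that the conclusion would otherwise fail.
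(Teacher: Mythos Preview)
Your proposal is correct and matches the paper's approach exactly: the paper presents this corollary as an immediate consequence of Theorem~\ref{th:nbrg} without giving a separate proof, and your argument (that an NPS of type $\gamma$ forces $\ell=1$, while the lower bound of Theorem~\ref{th:nbrg} gives $\ell\geq\min\{s,p,n\}\geq 2$) is precisely the intended one. Your remarks about the standing convention $n\geq 2$ and about the necessity of the consecutive-zeros hypothesis are also on point.
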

	
	\section{Partial Direct Product Difference Sets}
	\label{sec:pDPDS}
	
	Here we give a new difference set definition, called partial direct product difference set (PDPDS).
	
	\begin{definition}\label{df:pDPDS}
		Let $G = H \times P$, where the order of $H= \langle h \rangle$ and $P= \langle g \rangle $ are $n$ and $m$. A subset $R$ of $G$, $|R| =k$, is called an $(n,m,k,\lambda_1,\lambda_2,\lambda_3,\mu_1,\mu_2)$ partial direct product difference set (PDPDS) in $G$ relative to $H$ and $P$ if 
		differences $r_1r_2^{-1}$, $r_1, r_2 \in R$ with $r_1 \neq r_2$ represent
		\begin{itemize}
			\item all elements of $\{h^2,h^3,\ldots,h^n\}$ exactly $\lambda_1$ times,
			\item all non identity elements of $P$ exactly $\lambda_2$ times,
			\item all elements of $\{h\ ,h^{n+1}\}$ exactly $\lambda_3$ times,
			\item all elements of $\{h^2,h^3,\ldots,h^n\} \times \{g,g^2,\ldots,g^{p-1}\}$ exactly $\mu_1$ times,
			\item all non identity elements of $\{h\ ,h^{n+1}\} \times \{g,g^2,\ldots,g^{p-1}\}$ exactly $\mu_2$ times.
		\end{itemize}
	\end{definition}
	In the group-ring algebra notation, if $R$ is an $(n,m,k,\lambda_1,\lambda_2,\lambda_3,\mu_1,\mu_2)$-PDPDS in $G$ relative to $H$ and $P$ then
	\be\label{eqn:p-DPDS}
	\begin{aligned}
		RR^{(-1)} &= (k - \lambda_1 - \lambda_2 + \mu_1) + (\lambda_1 - \mu_1)H + (\lambda_2 - \mu_1)P + \mu_1 G+\\
		&(\lambda_3 - \lambda_1)\{h,h^{n+1}\}+ (\mu_2-\mu_1)(\{h\ ,h^{n+1}\} \times \{g,g^2,\ldots,g^{p-1}\})
	\end{aligned}
	\ee			
	holds in $\Z[G]$.
	
	\begin{remark} Let $G = H \times P$, $H= \langle h \rangle$ and $P= \langle g \rangle $ of order $n$ and $m$. An $(n,m,k,\lambda_1,\lambda_2,\lambda_1,\mu,\mu)$-PDPDS in $G$ relative to $H$ and $P$ is an $(n,m,k,\lambda_1,\lambda_2,\mu)$-DPDS in $G$ relative to $H$ and $P$. Moreover,  an $(n,m,k,\lambda,0,\linebreak\lambda,\lambda,\lambda)$-PDPDS in $G$ relative to $H$ and $P$ is an $(n,m,k,\lambda)$-RDS in $G$ relative to $P$. Finally, an $(n,m,k,\lambda,\lambda,\lambda,\lambda,\lambda)$-PDPDS in $G$ relative to $H$ and $P$ is an $(nm,k,\lambda)$-DS in $G$.
	\end{remark}
	
	We extend Proposition \ref{prop:counting} for DPDS to PDPDS below. 
	\begin{proposition}\label{pDPDS equality}
		Let $\underline{a} = (a_0,a_1,\ldots,a_{n+1},\ldots)$ be a almost $p$-ary sequence of period $n+2$ such that $a_0=0$ and $a_1=0$. Let $R$ be $(n+2,m,k,\lambda_1,\lambda_2,\lambda_1,\mu_1,\mu_2)$-PDPDS. Then $(n-1)(\lambda_1+(p-1)\mu_1)+2(\lambda_3+(p-1)\mu_2)=n^2-n$.
	\end{proposition}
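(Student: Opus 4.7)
The proposal is to mimic the double-counting argument of Proposition \ref{prop:counting}, but with the finer partition of $G\setminus\{e\}$ that the PDPDS definition imposes. First I would observe that $|R|=k=n$, since the sequence $\underline{a}$ has $n+2$ entries and exactly the two positions $i=0,1$ are zero, so the set $R_a$ of pairs $(g^{b_i}h^i)$ indexed by the non-zero positions has exactly $n$ elements. Consequently the multiset of non-trivial ordered differences $\{r_1r_2^{-1}:r_1\neq r_2\in R\}$ has cardinality $k(k-1)=n^2-n$.

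Next I would make the key structural observation that every element of $R$ has a distinct $H$-component $h^i$, so for $r_1\neq r_2$ the difference $r_1r_2^{-1}$ has non-trivial $H$-component. Therefore no difference ever falls in $P\setminus\{e\}$; this forces the PDPDS parameter $\lambda_2$ attached to $R$ to equal $0$, which is what will eliminate the $(p-1)\lambda_2$ term from the count.

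I would then tally the contributions from each of the five regions appearing in Definition \ref{df:pDPDS}. The set $\{h^2,\ldots,h^n\}$ contributes $(n-1)\lambda_1$, the set $P\setminus\{e\}$ contributes $(p-1)\lambda_2=0$, the two elements $\{h,h^{n+1}\}$ contribute $2\lambda_3$, the product $\{h^2,\ldots,h^n\}\times\{g,\ldots,g^{p-1}\}$ contributes $(n-1)(p-1)\mu_1$, and finally $\{h,h^{n+1}\}\times\{g,\ldots,g^{p-1}\}$ (whose $2(p-1)$ elements are all non-identity) contributes $2(p-1)\mu_2$. Setting the sum equal to $n^2-n$ and regrouping the $(n-1)$-terms and the $2$-terms yields
\[
(n-1)\bigl[\lambda_1+(p-1)\mu_1\bigr]+2\bigl[\lambda_3+(p-1)\mu_2\bigr]=n^2-n,
\]
which is the claimed identity.

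There is no substantive obstacle here; the proof is essentially bookkeeping. The only subtle point to flag clearly is why $\lambda_2$ drops out of the final formula, since it is present in the PDPDS definition but missing from the conclusion: it is the zero-$H$-component argument in the second step that justifies this, exactly as in Proposition \ref{prop:counting}. (I would also flag that the shape of $R$ occurring in the hypothesis, written $(n+2,m,k,\lambda_1,\lambda_2,\lambda_1,\mu_1,\mu_2)$, should be read as $(n+2,m,k,\lambda_1,\lambda_2,\lambda_3,\mu_1,\mu_2)$ in light of the conclusion.)
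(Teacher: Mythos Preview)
Your proposal is correct and follows essentially the same double-counting argument as the paper: both count the $n^2-n$ non-trivial ordered differences of $R$ against the partition of $G\setminus\{e\}$ given by the PDPDS definition and read off the identity. Your write-up is in fact more careful than the paper's, which silently drops the $\lambda_2$ contribution without explaining (as you do) that the distinct $H$-components in $R$ force every non-trivial difference to have non-trivial $H$-part, whence $\lambda_2=0$.
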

	
	\begin{proof}
		We know that there are $n^2-n$ non-identity elements in the difference table of $R$ and we know that $\lambda_1$ times $\{h^2,h^3,\ldots,h^n\}\times \{0\}$, $\lambda_3$ times $\{h\ ,h^{n+1}\}\times \{0\}$, $\mu_1$ times $\{h^2,h^3,\ldots,h^n\} \times \{g,g^2,\ldots,g^{p-1}\}$ and $\mu_2$ times  $\{h\ ,h^{n+1}\} \times \{g,g^2,\ldots,g^{p-1}\}$ from the definition of PDPDS. So, $(n-1)\lambda_1+2\lambda_3+(n-1)(p-1)\mu_1+2(p-1)\mu_2=n^2-n$. Hence the result follows.
	\end{proof}\\
	
	Now we present a relation between a PDPDS and an NPS with two distinct out-of-phase autocorrelation coefficients.
	\begin{theorem} \label{th:p-DPDS}
		Let $p$ be a prime, $n \geq 2$ be an integer, and $\underline{a} = (a_0,a_1,\ldots,a_{n+1},\ldots)$ be a almost $p$-ary sequence of period $n+2$ such that $a_0 = 0$ and $a_1 =0$.
		Let $H = \langle h \rangle$ and $P = \langle g \rangle $ be the (multiplicatively written) cyclic groups of order $n+2$ and $p$, respectively. Let $G$ be the group defined as $G=H \times P$.
		We choose a primitive $p$-th root of 1, $\zeta_p \in \C$. For $2 \leq i \leq n+1$ let $b_i$ be the integer in $\{0,1,2,\ldots,p-1\}$ such that $a_i = \zeta_p^{b_i}$.  
		Let $R$ be the subset of $G$ defined as
		\be \nn
		R = \{(g^{b_i}h^i) \in G : 2 \leq i \leq n+1\}.
		\ee
		Then
		$\underline{a}$ is an almost $p$-ary NPS of type $(\gamma_1, \gamma_2)$ if and only if $R$ is an $(n+2,p,n,\frac{n-\gamma_2 - 2}{p}+\gamma_2,0,\frac{n-\gamma_1 -1}{p}+\gamma_1,\frac{n-\gamma_2 - 2}{p},\frac{n-\gamma_1 -1}{p})$ PDPDS in $G$ relative to $H$ and $N$. 
	\end{theorem}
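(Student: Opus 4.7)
The plan is to compute $RR^{(-1)}\in\Z[G]$ explicitly, group the result by $h$-exponent, and match it coefficient-by-coefficient to the right-hand side of \eqref{eqn:p-DPDS}. The argument runs parallel to Theorem~\ref{th:DPDS:almost}, the only new ingredient being that two consecutive zero-symbols split the autocorrelation into two regimes depending on $t$, and this split is precisely what the separate blocks $\{h,h^{n+1}\}$ and $\{h^2,\ldots,h^n\}$ in Definition~\ref{df:pDPDS} were introduced to track.

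First I would expand
\[
RR^{(-1)} \;=\; \sum_{i,j=2}^{n+1} g^{b_i-b_j}\,h^{i-j} \;=\; n\cdot 1 \;+\; \sum_{t=1}^{n+1} h^t\,S_t,
\]
where $S_t=\sum_j g^{b_{j+t}-b_j}\in\Z[P]$ and $j$ runs over those indices in $\{2,\ldots,n+1\}$ for which $j+t\pmod{n+2}$ also lies in $\{2,\ldots,n+1\}$. A direct enumeration of the forbidden index set $\{0,1,-t,1-t\}\pmod{n+2}$ shows that $S_t$ is a sum of $n-1$ terms when $t\in\{1,n+1\}$ (three distinct forbidden values) and of $n-2$ terms when $t\in\{2,\ldots,n\}$ (four distinct forbidden values). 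The $\Z$-algebra homomorphism $\Z[P]\to\Z[\zeta_p]$, $g\mapsto\zeta_p$, sends $S_t$ to $\overline{C_{\underline{a}}(t)}$; by the NPS hypothesis this equals $\gamma_1$ in the first regime and $\gamma_2$ in the second.

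Next I would invoke the standard observation that an integer-valued combination $\sum_{c=0}^{p-1}n_c\zeta_p^c=\gamma\in\Z$ forces $n_1=\cdots=n_{p-1}=:\mu$ and $n_0=\gamma+\mu$, with total count $\gamma+p\mu$; this follows from the unique relation $1+\zeta_p+\cdots+\zeta_p^{p-1}=0$ and is the $v=pt_1$ special case of Lemma~\ref{thm.LL}. Applied to each $S_t$ this pins down the multiplicities $(\lambda_3,\mu_2)$ for $t\in\{1,n+1\}$ and $(\lambda_1,\mu_1)$ for $t\in\{2,\ldots,n\}$ as stated, and simultaneously forces the divisibilities $p\mid n-\gamma_1-1$ and $p\mid n-\gamma_2-2$ needed for the parameters to be integers. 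Summing these contributions back up and using the decomposition $H=1+(h+h^{n+1})+(h^2+\cdots+h^n)$ together with $HP=G$, a short group-ring rearrangement identifies the total with the right-hand side of \eqref{eqn:p-DPDS}. The converse direction is then obtained by reading off the $h^t$-component from the PDPDS identity and evaluating at $g=\zeta_p$ to recover the prescribed value $C_{\underline{a}}(t)$.

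The principal obstacle is purely bookkeeping: one must justify the case split on $t$ cleanly (including the convention that attaches $\gamma_1$ to the ``edge'' shifts $t\in\{1,n+1\}$ and $\gamma_2$ to the ``bulk'' shifts $t\in\{2,\ldots,n\}$), and then check that the superpositions between $H$, $P$, $G$, $\{h,h^{n+1}\}$ and $\{h,h^{n+1\}}\times\{g,\ldots,g^{p-1}\}$ on the right-hand side of \eqref{eqn:p-DPDS} reproduce the contributions of the $S_t$ on the nose. No step is deep, but the algebra must be executed without sign or indexing slips.
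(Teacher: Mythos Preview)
Your argument is correct and complete in outline; every step you sketch can be carried out exactly as you describe, and the parameter values fall out precisely.

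The paper, however, proceeds by a different mechanism. Rather than expanding $RR^{(-1)}$ directly and reading off the coefficient of each $h^t$ in $\Z[P]$, it works via characters: it evaluates $\chi(RR^{(-1)})$ for $\chi$ principal and nonprincipal on $P$ (obtaining $(H-\{1,h\})(H-\{1,h^{n+1}\})$ and $\sum_t C_{\underline a}(t)^\sigma h^t$ respectively), writes down an ansatz $RR^{(-1)}=n-x+xH-zP+zG+x'\{h,h^{n+1}\}+z'(\{h,h^{n+1}\}\times(P\setminus\{1\}))$, and solves the resulting linear system for $x,x',z,z'$. Your route is more elementary and self-contained: it never names a character, replacing Fourier inversion by the single observation that the kernel of $\Z[P]\to\Z[\zeta_p]$ is generated by $1+g+\cdots+g^{p-1}$, which simultaneously pins down $(\lambda_i,\mu_i)$ and forces the divisibility conditions $p\mid n-\gamma_1-1$ and $p\mid n-\gamma_2-2$. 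The paper's approach, by contrast, is the standard character-theoretic template for difference-set identities and extends more mechanically to other group structures; it also isolates the $\gamma_1/\gamma_2$ split at the level of the image $A\overline{A}^{(-1)}\in\C[H]$ rather than term-by-term in $\Z[P]$. Both arguments ultimately rest on the same relation in $\Z[\zeta_p]$, so the difference is one of packaging rather than depth.
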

	\begin{proof}
		Let $A = \sum_{i = 0}^{n-1}{a_ih^i} \in \C[H]$. Then we have
		$$A\overline{A}^{(-1)} = \sum_{t = 0}^{n-1}{C_a(t)h^t}.$$
		Let $\chi$ be a character on $P$. We extend $\chi$ to $G$ such that $\chi(h) = h$. Let $\sigma \in \Gal(\Q(\zeta_p)\backslash \Q)$ such that $\sigma(\zeta_p) = \chi(\zeta_p)$.
		If $\chi$ is a nonprincipal character on $P$, then we have $\chi(R) = A^{\sigma}$, and so
		$$\chi(RR^{(-1)}) = (A\overline{A}^{(-1)})^{\sigma}.$$
		On the other hand, if $\chi$ is a principal character on $P$, then we have
		$$\chi(R) = H-\{1,h\}$$ and also $$\chi(R^{(-1)}) = H-\{1,h^{n+1}\}.$$
		Then
		\be \nn
		\chi(RR^{(-1)}) = \left\lbrace \begin{array}{ll}  (H-\{1,h\})( H-\{1,h^{n+1}\})& \mbox{if } \chi \mbox{ is principal on }P,\\ \sum_{t = 0}^{n-1}{C_a(t)^\sigma h^t} & \mbox{if } \chi \mbox{ is nonprincipal on $P$}. \end{array} \right.
		\ee
		So
		\be \nn
		\chi(RR^{(-1)}) = \left\lbrace \begin{array}{ll} (n-2)H+2+\{h\ ,h^{n+1}\} & \mbox{if } \chi \mbox{ is pr.~on }P,\\ \sum_{t = 0}^{n-1}{C_a(t)^\sigma h^t} & \mbox{if } \chi \mbox{ is nonpr.~on $P$}. \end{array} \right.
		\ee
		If $\underline{a}$ is an NPS of type $(\gamma_1,\gamma_2)$, then
		\be \label{eq:NPS-pDPDS} 
		\chi(RR^{(-1)}) = \left\lbrace \begin{array}{ll} (n-2)H+2+\{h\ ,h^{n+1}\} & \mbox{if } \chi \mbox{ is pr.~on }P,\\ n-\gamma_2+(\gamma_1-\gamma_2)\{h\ ,h^{n+1}\}+\gamma_2H & \mbox{if } \chi \mbox{ is nonpr.~on $P$}. \end{array} \right.
		\ee
		By extending $\chi$ to $H$ we obtain
		\be \label{diag:s2}
		\chi(RR^{(-1)}) = \left\lbrace
		\begin{array}{ll}
			n^2 & \mbox{if } \chi \mbox{ is pr.~on }P  \mbox{ and } H,\\
			2+h+h^{-1} & \mbox{if } \chi \mbox{ is pr.~on }P  \mbox{ and nonpr.~on }H,\\
			n+\gamma_2(n-1)+2\gamma_1 & \mbox{if } \chi \mbox{ is nonpr.~on }P   \mbox{ and pr.~on }H,\\
			n -\gamma_2+h+h^{-1} & \mbox{if } \chi \mbox{ is nonpr.~on }P  \mbox{ and nonpr.~on }H.\\
		\end{array}
		\right.
		\ee
		First, we can consider
		\be
		\begin{array}{ll}
			\label{eqn:pDPDS2}
			RR^{(-1)} = &n-x + xH -zP + zG+x'\{h\ ,h^{n+1}\}+\\
			&z'(\{h\ ,h^{n+1}\} \times \{g,g^2,\ldots,g^{p-1}\})
		\end{array} 
		\ee
		for some integers $x,x',z,z'$.
		We get the following equations by \eqref{diag:s2} and \eqref{eqn:pDPDS2}
		\be \label{eqn:eq1}
		2+h+h^{-1}=n-x+zp+x'(h+h^{-1})
		\ee
		\be \label{eqn:eq2}
		n-\gamma_2+(h+h^{-1})(\gamma_1-\gamma_2)=n-x+x'(h+h^{-1})-z'(h+h^{-1})
		\ee
		%
		If we solve (\ref{eqn:eq1}) and (\ref{eqn:eq2}) together, then we get $x=\gamma_2$, $z=\frac{n-\gamma_2 - 2}{p}$, $z'=\frac{\gamma_2-\gamma_1+1}{p}$, $x'=\gamma_1-\gamma_2+\frac{\gamma_2-\gamma_1+1}{p}$.
		Now we can easily get that $R$ is an $(n+2,p,n,\frac{n-\gamma_2 - 2}{p}+\gamma_2,0,\frac{n-\gamma_1 -1}{p}+\gamma_1,\frac{n-\gamma_2 - 2}{p},\frac{n-\gamma_1 -1}{p})$-PDPDS by using (\ref{eqn:p-DPDS}) and (\ref{eqn:pDPDS2}). 
		
		On the other hand, $(n+2,p,n,\frac{n-\gamma_2 - 2}{p}+\gamma_2,0,\frac{n-\gamma_1 -1}{p}+\gamma_1,\frac{n-\gamma_2 - 2}{p},\frac{n-\gamma_1 -1}{p})$-PDPDS satisfies the diagram \eqref{eq:NPS-pDPDS} for any character $\chi$ on $G$. So we get that $\underline{a} = (a_0,a_1,\ldots,a_{n+1})$ is an NPS of type $(\gamma_1,\gamma_2)$.
	\end{proof}\\
	
	Theorem \ref{th:p-DPDS} gives a necessary condition on the existence of a NPS with two distinct out-of-phase autocorrelation coefficients. Moreover this theorem gives bound on $\gamma_1$,$\gamma_2$. We state this condition in Corollary \ref{cor:NPS2}. After that we give an example of Theorem \ref{th:p-DPDS}.
	\begin{corollary} \label{cor:NPS2}
		If $\underline{a}$ is an almost $p$-ary NPS of type $(\gamma_1, \gamma_2)$ and length $n+2$, then $p$ divides $n-\gamma_2-2$ and $n-\gamma_1-1$. And there exists an almost $ p $-ary sequence of  type $ (\gamma_1,\gamma_2) $ and period $ n+2 $ with two consecutive zero-symbols for $-\mu_1\leq\gamma_2\leq n-2$ and $-\mu_2\leq\gamma_1\leq n-1$.
	\end{corollary}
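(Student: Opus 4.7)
The plan is to read off both conclusions directly from Theorem \ref{th:p-DPDS}. Assume $\underline{a}$ is an almost $p$-ary NPS of type $(\gamma_1,\gamma_2)$ of period $n+2$ with two consecutive zero-symbols. After applying a cyclic shift (which preserves the autocorrelation values), we may assume $a_0=a_1=0$, so Theorem \ref{th:p-DPDS} applies: the associated set $R$ is an $(n+2,p,n,\lambda_1,0,\lambda_3,\mu_1,\mu_2)$-PDPDS with
\[
\lambda_1=\tfrac{n-\gamma_2-2}{p}+\gamma_2,\quad \lambda_3=\tfrac{n-\gamma_1-1}{p}+\gamma_1,\quad \mu_1=\tfrac{n-\gamma_2-2}{p},\quad \mu_2=\tfrac{n-\gamma_1-1}{p}.
\]

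Step 1 (divisibility). By Definition \ref{df:pDPDS} each of $\mu_1,\mu_2,\lambda_1,\lambda_3$ counts how many times a certain subset of $G$ is represented by differences from $R$, hence must be an integer. Requiring $\mu_1\in\Z$ forces $p\mid n-\gamma_2-2$, and requiring $\mu_2\in\Z$ forces $p\mid n-\gamma_1-1$. (Observe that once these two divisibilities hold, $\lambda_1=\mu_1+\gamma_2$ and $\lambda_3=\mu_2+\gamma_1$ are automatically integers, so no further integrality constraint is obtained.)

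Step 2 (non-negativity bounds). The same counting interpretation forces $\mu_1,\mu_2,\lambda_1,\lambda_3\geq 0$. From $\mu_1\geq 0$ we get $\gamma_2\leq n-2$, and from $\lambda_1=\mu_1+\gamma_2\geq 0$ we get $\gamma_2\geq -\mu_1$; these combine to give $-\mu_1\leq\gamma_2\leq n-2$. In the same way $\mu_2\geq 0$ yields $\gamma_1\leq n-1$ and $\lambda_3\geq 0$ yields $\gamma_1\geq -\mu_2$, i.e.\ $-\mu_2\leq\gamma_1\leq n-1$.

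There is no real obstacle here: the proof is a one-line bookkeeping consequence of the PDPDS parameters produced by Theorem \ref{th:p-DPDS}, together with the fact that the multiplicity parameters in Definition \ref{df:pDPDS} are non-negative integers. The only point requiring a brief comment is the reduction to the case $a_0=a_1=0$, which is harmless because cyclic shifts preserve both the ``almost $p$-ary'' structure and the autocorrelation coefficients $C_{\underline{a}}(t)$.
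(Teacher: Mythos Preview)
Your proof is correct and follows exactly the approach the paper intends: the corollary is stated there without proof, merely as a direct consequence of Theorem~\ref{th:p-DPDS}, and your argument---that the PDPDS multiplicities $\mu_1,\mu_2,\lambda_1,\lambda_3$ must be nonnegative integers---is precisely the bookkeeping the paper has in mind. Your remark on the cyclic shift to normalize $a_0=a_1=0$ is also appropriate.
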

	
	\begin{example}
		Sequence $\underline{a}=(0,0,\zeta_3,\zeta_3,\zeta_3)$ is an 
		almost 3-ary NPS of type (2,1) and $R$ is an $(3+2,3,3,\frac{3-1-2}{3}+1,0,\frac{3-2-1}{3}+2,\frac{3-1-2}{3},\frac{3-2-1}{3})=(5,3,3,1,0,2,0,0)$ PDPDS in $\mathbb{Z}_5\times\mathbb{Z}_3$. 
		Similarly, $\underline{a}=(0,0,\zeta_3^2,\zeta_3,1,\zeta_3,\zeta_3^2)$  is an almost 3-ary NPS of type (-2,0) and $R$ is an $(7,3,5,1,0,0,1,2)$ PDPDS in $\mathbb{Z}_7\times\mathbb{Z}_3$. 
	\end{example}
	
	Next we obtain a generalization of a well known theorem on difference sets, see \cite[Lemma VI.5.4]{beth1999design} or \cite[Proposition 1]{ozbudak2012nonexistence}.
	\begin{proposition}\label{count-s}
		Let $ R $ be a $(n+2,p,n,\frac{n-\gamma_2 - 2}{p}+\gamma_2,0,\frac{n-\gamma_1 -1}{p}+\gamma_1,\frac{n-\gamma_2 - 2}{p},\frac{n-\gamma_1 -1}{p})$ PDPDS in $G$ relative to $H$ and $N$. Let $R$ have $ s_i $ many elements having $ i $ in the second component for $ i=0,1,2,\dots,p-1 $. Then 
		\be \label{prob:si1}
		\sum_{j=0}^{p-1}{s_j}^2= (\frac{n-\gamma_2 - 2}{p}+\gamma_2)(n-1)+(\frac{n-\gamma_1 -1}{p}+\gamma_1)2+n
		\ee
		and \be \label{prob:si2} \sum_{j=0}^{p-1}s_js_{j-i}=(\frac{n-\gamma_2 - 2}{p})(n-1)+(\frac{n-\gamma_1 -1}{p})2
		\ee
		for each $ i=1,2,\dots, \lceil\frac{p-1}{2}\rceil$, where subscripts are computed modulo $ p $.
	\end{proposition}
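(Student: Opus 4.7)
\emph{Strategy.} My plan is to apply the natural ring homomorphism $\pi : \Z[G] \to \Z[P]$ induced by the quotient $G = H \times P \twoheadrightarrow P$ (equivalently, set $h=1$ in the group ring) to both sides of the defining PDPDS identity \eqref{eqn:p-DPDS}. By the very definition of the $s_j$,
$$\pi(R) \;=\; \sum_{j=0}^{p-1} s_j\, g^j,$$
and therefore
$$\pi(R)\,\pi(R^{(-1)}) \;=\; \sum_{i=0}^{p-1}\Bigl(\sum_{j=0}^{p-1} s_j\, s_{j-i}\Bigr) g^i,$$
with subscripts on $s$ read modulo $p$. Thus the two left-hand sides of \eqref{prob:si1}--\eqref{prob:si2} are exactly the coefficients of $g^0$ and of $g^i$ (for $i\neq 0$) in $\pi(RR^{(-1)})$.

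For the right-hand sides I would use $\pi(H)=n+2$, $\pi(P)=P$, $\pi(G)=(n+2)P$, $\pi(\{h,h^{n+1}\})=2$, and $\pi(\{h,h^{n+1}\}\times\{g,g^2,\ldots,g^{p-1}\})=2(P-1)$ to push \eqref{eqn:p-DPDS} through $\pi$. After substituting the PDPDS parameters coming from Theorem \ref{th:p-DPDS} (that is, $k=n$, $\lambda_2=0$, and the explicit forms of $\lambda_1,\lambda_3,\mu_1,\mu_2$) and collecting terms, the coefficient of $g^0$ simplifies to $k+\lambda_1(n-1)+2\lambda_3$, which is the right-hand side of \eqref{prob:si1}; the coefficient of $g^i$ for $i\neq 0$ simplifies to $\lambda_2+\mu_1(n-1)+2\mu_2$, which is the right-hand side of \eqref{prob:si2}. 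The restriction to $1\le i\le\lceil(p-1)/2\rceil$ then comes for free: reindexing gives $\sum_j s_j s_{j-i}=\sum_j s_j s_{j+i}$, so the coefficient of $g^i$ and that of $g^{-i}$ coincide.

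I do not expect a conceptual obstacle; the work is essentially bookkeeping of coefficients in $\Z[P]$. The one step requiring care is the treatment of $(P-1)$: it contributes $0$ to the $g^0$-coefficient and $1$ to each other $g^i$-coefficient, so the summand $(\mu_2-\mu_1)(\{h,h^{n+1}\}\times\{g,g^2,\ldots,g^{p-1}\})$ affects \eqref{prob:si2} (via $2\mu_2$) but does \emph{not} inject an extra $\mu_2$ into \eqref{prob:si1}. Keeping track of this asymmetry is precisely what makes the right-hand sides of \eqref{prob:si1} and \eqref{prob:si2} differ, and it is the only real deviation from the corresponding computation for the classical DPDS.
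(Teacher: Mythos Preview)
Your argument is correct. Applying the ring homomorphism $\pi:\Z[G]\to\Z[P]$ to the identity \eqref{eqn:p-DPDS} and reading off the coefficients of $g^0$ and $g^i$ does yield exactly \eqref{prob:si1} and \eqref{prob:si2}; the bookkeeping you describe (in particular the observation that $\pi(P-1)$ contributes $0$ at $g^0$ and $1$ elsewhere) is accurate, and the simplification to $k+(n-1)\lambda_1+2\lambda_3$ and $\lambda_2+(n-1)\mu_1+2\mu_2$ checks out.

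The paper proceeds differently, via an explicit double-counting argument rather than a group-ring homomorphism. It introduces the projection $\psi:G\to N$ onto the second component, defines $\mathcal{T}_i=\{(\beta_1,\beta_2)\in R\times R:\beta_1\neq\beta_2,\ \psi(\beta_1-\beta_2)=i\}$, and computes $|\mathcal{T}_i|$ in two ways: once directly from the PDPDS multiplicities (giving the right-hand sides), and once by partitioning $\mathcal{T}_i$ according to $\psi(\beta_1)=j$, which yields $\sum_j s_j s_{j-i}$ (or $\sum_j s_j(s_j-1)$ when $i=0$). The two arguments are of course equivalent in content---your $\pi$ is the algebraic encoding of the paper's $\psi$, and comparing coefficients in $\Z[P]$ is the same as double-counting $|\mathcal{T}_i|$---but your packaging is more compact and leverages \eqref{eqn:p-DPDS} directly, whereas the paper's set-theoretic counting is more elementary and does not rely on the group-ring formalism having been set up.
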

	
	\begin{proof}
		Let $ \psi $ the map from $ G=H\times N $ to $ N $ sending $ (a,i) $ to $ i $. Let $ A $ be the multiset consisting of the images (counting multiplicities) of $ \psi $ restricted to $ R $. By reordering on $ A $ we have $$ A=\{*\underbrace{0,0,\dots,0}_{s_0},\underbrace{1,1,\dots,1}_{s_1},\underbrace{2,2,\dots,2}_{s_2},\dots,\underbrace{p-1,p-1,\dots,p-1}_{s_{p-1}}*\} .$$
		So,
		$$ s_0=|\{(b,i)\in R: i=0\}| ,\dots,s_{p-1}=|\{(b,i)\in R: i=p-1\}|$$ and 
		\be  \label{s sum}
		s_0+s_1+s_2+\dots+s_{p-1}=|R|=n.
		\ee
		Let $\mathcal{T}_i $ be the subset $ R\times R $ defined as $$\mathcal{T}_i=\{(\beta_1,\beta_2)\in R\times R: \beta_1\neq\beta_2 \: and \: \psi(\beta_1-\beta_2)=i\}. $$
		As $ R $ is a $(n+2,p,n,\frac{n-\gamma_2 - 2}{p}+\gamma_2,0,\frac{n-\gamma_1 -1}{p}+\gamma_1,\frac{n-\gamma_2 - 2}{p},\frac{n-\gamma_1 -1}{p})$ PDPDS, for the cardinality $ |\mathcal{T}_i| $ of $ \mathcal{T}_i $, using definition of PDPDS, we obtain that
		\be \label{diagram}
		{|\mathcal{T}_i| = \left\lbrace
			\begin{array}{ll}
				(\frac{n-\gamma_2 - 2}{p}+\gamma_2)(n-1)+(\frac{n-\gamma_1 -1}{p}+\gamma_1)2, & i=0 \\ 
				(\frac{n-\gamma_2 - 2}{p})(n-1)+(\frac{n-\gamma_1 -1}{p})2, & 0\lneq i \leq p-1\\
			\end{array}
			\right.}
		\ee 	
		Let define as $\mathcal{T}_{i,j}=\{(\beta_1,\beta_2)\in \mathcal{T}_i: \psi(\beta_1)=j\} \subset \mathcal{T}_i$  for $ 0\leq i \leq p-1 $ and $ 0\leq j \leq p-1 $. 
		Then we have, \be \label{eq} |\mathcal{T}_i| =\sum_{j=0}^{p-1}|\mathcal{T}_{i,j}|. \ee
		For $ 0\lneq i \leq p-1  $ and $ 0\leq j \leq p-1 $, we determine $ \mathcal{T}_{i,j} $. We know that $ (\beta_1,\beta_2)\in \mathcal{T}_{i,j} $ if only if $ \beta_1 \in R $, $ \psi(\beta_1)=j $ and $ \beta_2 \in R $, $ \psi(\beta_2)=j-i $. Therefore we get that , \\
		$$ |\{\beta_1\in R: \psi(\beta_1)=j\}|=s_j \; and \;  |\{\beta_2\in R: \psi(\beta_2)=j-i\}|=s_{j-i},$$
		where we define the subscript $ j-i $ modulo $ p $. Hence using  (\ref{diagram}) and (\ref{eq}) we conclude that 
		\be \label{eq2}
		(\frac{n-\gamma_2 - 2}{p})(n-1)+(\frac{n-\gamma_1 -1}{p})2=\sum_{j=0}^{p-1}s_js_{j-i} . 
		\ee
		Remark that it is enough to consider the subset of equation in (\ref{eq2}) corresponding to $ 1\leq i\leq \lceil\frac{p-1}{2}\rceil$ because each equation in (\ref{eq2}) with $ \lceil\frac{p-1}{2}\rceil \leq i\leq p-1 $ is the same as an equation in (\ref{eq2}) with $ 1\leq i\leq \lceil\frac{p-1}{2}\rceil$. \\
		For $ 0\leq j \leq p-1 $, we determine $ \mathcal{T}_{0,j} $. We know that $ (\beta_1,\beta_2)\in \mathcal{T}_{0,j} $ if only if $ \beta_1 \in R $ , $ \psi(\beta_1)=j $ and $ \beta_2 \in R $, $ \psi(\beta_2)=j $ and $ \beta_1 \neq \beta_2 $.  Therefore we get that $ |\mathcal{T}_{0,j}|=s_j(s_j-1) $ for $ 0\leq j \leq p-1 $. Hence using  (\ref{s sum}),(\ref{diagram}) and (\ref{eq}) we conclude that 
		\be \nn
		\begin{aligned}
			(\frac{n-\gamma_2 - 2}{p}+\gamma_2)(n-1)+(\frac{n-\gamma_1 -1}{p}+\gamma_1)2&=
			 \sum_{j=0}^{p-1}s_j(s_j-1)=\sum_{j=0}^{p-1}s_j^2 -n
		\end{aligned}
		 \ee
		and therefore
		$$ \sum_{j=0}^{p-1}{s_j}^2= (\frac{n-\gamma_2 - 2}{p}+\gamma_2)(n-1)+(\frac{n-\gamma_1 -1}{p}+\gamma_1)2+n.$$
	\end{proof}	\\
	
	Using Propositions \ref{pDPDS equality} and \ref{count-s}, we get the following result.
	\begin{corollary}\label{sum sj}
		Let $ R $ be a $(n+2,p,n,\lambda_1,\lambda_2,\lambda_3,\mu_1,\mu_2) = (n+2,p,n,\frac{n-\gamma_2 - 2}{p}+\gamma_2,0,\frac{n-\gamma_1 -1}{p}+\gamma_1,\frac{n-\gamma_2 - 2}{p},\frac{n-\gamma_1 -1}{p})$ PDPDS in $G$ relative to $H$ and $N$. Let $R$ have $ s_i $ many elements having $ i $ in the second component for $ i=0,1,2,\dots,p-1 $. Then \be \label{cor:si}
		(\sum_{j=0}^{p-1}s_js_{j-i})(p-1)+\sum_{j=0}^{p-1}{s_j}^2=n^2
		\ee
		for each $ i=1,2,\dots, \lceil\frac{p-1}{2}\rceil$ where subscripts are computed modulo $ p $.
	\end{corollary}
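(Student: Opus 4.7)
The plan is to simply combine the three equations already established: the two sum identities in Proposition \ref{count-s} and the counting identity in Proposition \ref{pDPDS equality}. Writing
$$\lambda_1 = \tfrac{n-\gamma_2-2}{p}+\gamma_2,\quad \mu_1=\tfrac{n-\gamma_2-2}{p},\quad \lambda_3=\tfrac{n-\gamma_1-1}{p}+\gamma_1,\quad \mu_2=\tfrac{n-\gamma_1-1}{p},$$
Proposition \ref{count-s} says that $\sum_{j=0}^{p-1}s_j^2 = \lambda_1(n-1)+2\lambda_3+n$ and $\sum_{j=0}^{p-1}s_js_{j-i}=\mu_1(n-1)+2\mu_2$ for each admissible $i$.

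Substituting these two identities into the left-hand side of \eqref{cor:si}, I would obtain
$$(p-1)\bigl(\mu_1(n-1)+2\mu_2\bigr)+\lambda_1(n-1)+2\lambda_3+n,$$
and the goal is to recognize this as $n^2$. Grouping the $(n-1)$ terms and the constant terms separately, this expression equals
$$(n-1)\bigl(\lambda_1+(p-1)\mu_1\bigr)+2\bigl(\lambda_3+(p-1)\mu_2\bigr)+n,$$
which by Proposition \ref{pDPDS equality} is $(n^2-n)+n = n^2$. This closes the argument.

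There is no real obstacle here: the corollary is essentially a bookkeeping consequence of the two preceding propositions, and the only care needed is to verify that the parameters used in Proposition \ref{pDPDS equality} match the ones pulled from Proposition \ref{count-s}, and that the identity in Proposition \ref{pDPDS equality} is applied with the same quintuple $(\lambda_1,\lambda_2,\lambda_3,\mu_1,\mu_2)$. Since Proposition \ref{count-s} is stated for the same PDPDS parameters as the corollary, this verification is immediate.
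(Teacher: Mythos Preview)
Your proposal is correct and follows essentially the same route as the paper: substitute the two identities from Proposition~\ref{count-s} into the left-hand side of \eqref{cor:si}, regroup to obtain $(n-1)(\lambda_1+(p-1)\mu_1)+2(\lambda_3+(p-1)\mu_2)+n$, and then invoke Proposition~\ref{pDPDS equality} to conclude that this equals $n^2$. The only difference is cosmetic---the paper phrases the first step as ``multiply one equation by $p-1$ and add the other'' rather than ``substitute into the left-hand side''---but the computation is identical.
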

	
	\begin{proof}
		We multiply \eqref{prob:si1} by $ p-1 $ and add to \eqref{prob:si2}, so we get
		\be \nn
		\begin{aligned}
			(\sum_{j=0}^{p-1}s_js_{j-i})(p-1)+\sum_{j=0}^{p-1}{s_j}^2 
			&= ((\frac{n-\gamma_2 - 2}{p})(n-1)+(\frac{n-\gamma_1 -1}{p})2)(p-1) \\
			&+(\frac{n-\gamma_2 - 2}{p}+\gamma_2)(n-1)+(\frac{n-\gamma_1 -1}{p}+\gamma_1)2+n.
		\end{aligned}
		\ee
		Equivalently, we have
		\be \nn
		\begin{aligned}
		(\sum_{j=0}^{p-1}s_js_{j-i})(p-1)+\sum_{j=0}^{p-1}{s_j}^2&=
		(\mu_1(n-1)+\mu_22)(p-1)+\lambda_1(n-1)+\lambda_32+n\\
		&=(n-1)(\lambda_1+(p-1)\mu_1)+2(\lambda_3+(p-1)\mu_2)+n.
		\end{aligned}
		\ee
		Finally, by Proposition \ref{pDPDS equality}, we get the result
		\be \nn
			(\sum_{j=0}^{p-1}s_js_{j-i})(p-1)+\sum_{j=0}^{p-1}{s_j}^2&=
			n^2-n+n=n^2.
		\ee
	\end{proof}\\

	Below we prove a bound on $\gamma_2$ for the existence of a NPS of type $(\gamma_1,\gamma_2)$ by using Proposition \ref{prop:counting}.
	\begin{theorem}\label{thm:bound}
		Let $ p $ be an odd prime number, $ n\in \mathbb{Z}^+$, $\gamma_1, \gamma_2 \in \Z$ such that $n-\gamma_2 - 2 = k_1p$ and $n-\gamma_1-1 =k_2p$ for some $k_1,k_2 \in \N$. Then, there does not exist an almost $ p $-ary sequence of  type $ (\gamma_1,\gamma_2) $ and period $ n+2 $ with two consecutive zero-symbols for $\gamma_2 \le \left\lfloor\frac{-pk_1-4+ \sqrt{p^2k_1^2-4pk_1+8pk_2}}{2}\right\rfloor$.	
	\end{theorem}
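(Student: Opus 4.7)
The plan is to combine Proposition~\ref{count-s} with the Cauchy--Schwarz inequality applied to the nonnegative integer counts $s_0,\ldots,s_{p-1}$, thereby converting the existence question into a quadratic inequality in $\gamma_2$. Suppose for contradiction that such a sequence exists. By Theorem~\ref{th:p-DPDS} the associated set $R$ is a PDPDS whose parameters, using $pk_1 = n-\gamma_2-2$ and $pk_2 = n-\gamma_1-1$, are $\lambda_1 = k_1+\gamma_2$, $\lambda_2 = 0$, $\lambda_3 = k_2+\gamma_1$, $\mu_1 = k_1$, $\mu_2 = k_2$. Let $s_0,\ldots,s_{p-1}$ be attached to $R$ as in Proposition~\ref{count-s}; then $\sum_j s_j = n$ and $\sum_j s_j^2$ has the explicit value given there.

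The central estimate is Cauchy--Schwarz: $n^2 = \bigl(\sum_j s_j\bigr)^2 \le p \sum_j s_j^2$. Substituting the formula for $\sum_j s_j^2$ and eliminating $n$ and $\gamma_1$ via $n = pk_1+\gamma_2+2$ and $\gamma_1 = \gamma_2 + 1 + p(k_1-k_2)$ (both forced by the hypotheses, the latter by equating the two expressions for $n$), the inequality should simplify after the routine but careful cancellation to
\[
\gamma_2^2 + (pk_1+4)\gamma_2 + (3pk_1 - 2pk_2 + 4) \ge 0.
\]
Its discriminant is exactly $p^2k_1^2 - 4pk_1 + 8pk_2$, matching the expression in the statement, and the two real roots are $\bigl(-(pk_1+4) \pm \sqrt{p^2k_1^2 - 4pk_1 + 8pk_2}\bigr)/2$. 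Hence the Cauchy--Schwarz inequality is violated precisely when $\gamma_2$ lies strictly between these two roots.

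Under the hypothesis $\gamma_2 \le \bigl\lfloor \bigl(-(pk_1+4) + \sqrt{p^2k_1^2 - 4pk_1 + 8pk_2}\bigr)/2 \bigr\rfloor$, $\gamma_2$ lies strictly below the larger root; combined with the implicit lower bound $\gamma_2 \ge -pk_1$ (equivalent to $n \ge 2$, which is required throughout), $\gamma_2$ is pushed into the forbidden open interval between the roots, producing the contradiction. I expect the main obstacle to be twofold: (i) the bookkeeping of the algebraic simplification to the stated quadratic, making sure all $n$- and $\gamma_1$-terms collapse cleanly once everything is rewritten in $\gamma_2, k_1, k_2, p$; and (ii) verifying that the smaller root always lies below $-pk_1$ in the parameter range that the theorem actually covers, so that no extra case analysis is needed for very negative $\gamma_2$. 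This last point should reduce, after squaring, to the elementary check $16 < 4pk_1 + 8pk_2$ in the generic regime $pk_1 \ge 4$, with the small-parameter cases handled directly.
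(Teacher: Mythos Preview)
Your proposal is correct and follows essentially the same route as the paper: invoke Theorem~\ref{th:p-DPDS} to obtain the PDPDS, use Proposition~\ref{count-s} to express $\sum_j s_j^2$ in terms of $k_1,k_2,\gamma_2$, apply Cauchy--Schwarz in the form $\sum_j s_j^2 \ge n^2/p$, and reduce to the quadratic $(\gamma_2+2)^2 + pk_1(\gamma_2+2) + pk_1 - 2pk_2 \ge 0$ (which is exactly your displayed quadratic after expanding). Your treatment of the smaller-root exclusion via $\gamma_2 \ge -pk_1$ is in fact more carefully argued than the paper's, which simply asserts the smaller root is below $-pk_1-1$ without the case split you outline.
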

	
	\begin{proof}
		Assume there exists an almost $p$-ary sequence of length $n$ and type $ (\gamma_1,\gamma_2) $ such that $\gamma_2 > \left\lfloor\frac{-pk_1-4+ \sqrt{p^2k_1^2-4pk_1+8pk_2}}{2} \right\rfloor$. 
		Set $n-\gamma_2-2=pk_1 $ and $ n-\gamma_1-1=pk_2 $ in (\ref{prob:si1}), and so we get
		\be \label{thm:si2-1}
		\sum_{j=0}^{p-1}{s_j}^2= pk_1\gamma_2+3pk_1+pk_1^2+\gamma_2k_1+k_1+2k_2-2pk_2+(\gamma_2+2)^2 .	
		\ee
		On the other hand we know that $s_0 + s_1 + \cdots + s_{p-1} = n=pk_1+\gamma_2+2$, that is,
		\be \label{thm:sum_si}
		\sum_{j=0}^{p-1}{s_j}=pk_1+\gamma_2+2.
		\ee 
		Hence \eqref{thm:sum_si} gives that 
		\be \label{thm:si2-2}
		\sum_{j=0}^{p-1}{s_j^2} \ge (\frac{pk_1+\gamma_2+2}{p})^2p
		=pk_1^2+2k_1(\gamma_2+2)+\frac{(\gamma_2+2)^2}{p}.
		\ee 
		We consider \eqref{thm:si2-1} and \eqref{thm:si2-2} together. And we get 
		$$pk_1^2+2k_1(\gamma_2+2)+\frac{(\gamma_2+2)^2}{p} \le pk_1\gamma_2 + 3pk_1 + pk_1^2  + \gamma_2k_1 + k_1 + 2k_2-2pk_2+(\gamma_2+2)^2.
		$$ 
		Equivalently, we have
		\be \label{eq:gamma2}(1-p)((\gamma_2+2)^2+pk_1(\gamma_2+2)+pk_1-2pk_2)\leq 0
		\ee
		Firstly, \eqref{eq:gamma2} holds if $ \gamma_2 \leq \frac{-pk_1-\sqrt{p^2k_1^2-4pk_1+8pk_2}}{2}-2 < -pk_1-1$. But this contradicts to $n-\gamma_2 -2 = pk_1$. Secondly, \eqref{eq:gamma2} holds if $ \gamma_2 \geq \frac{-pk_1+ \sqrt{p^2k_1^2-4pk_1+8pk_2}}{2}-2$, but this contradicts to the beginning assumption. 
	\end{proof}	
	
\begin{table}[t] 
	\caption{Non-existence results on NPS by Theorem \ref{thm:bound} for n=15}
\centering
\label{tab:gamma2}
\begin{tabular}{|c|c|c|c|}
            \hline
            \multicolumn{4}{|c|}{p=5}\\
			\hline
            $\gamma_1$ & $\gamma_2$ & B & Comments\\
            \hline
			-10 & -8 & -1 & not exist\\
			\hline
			-10 & -5 & -1 & not exist\\
			\hline
			-10 & -2 & -1 & not exist \\
			\hline
			-10 & 1 & 0 &  \\
			\hline
			-10 & 4 & 1 &   \\
			\hline
			-10 & 7 & 2 &   \\
			\hline
			-10 & 10 & 3 &  \\ 
			\hline
			-7 & -8 & -2 & not exist\\ 
			\hline
			-7 & -5 & -1 & not exist  \\
			\hline
			-7 & -2 & -1 & not exist \\ 
			\hline
			-7 & 1 & 0 &   \\
			\hline
			-7 & 7 & 1 &   \\
			\hline
			-7 & 10 & 2 &  \\ 
			\hline
			-4 & -8 & -2 & not exist\\ 
			\hline
			-4 & -5 & -2 & not exist \\
			\hline
			-4 & -2 & -1 & not exist \\
			\hline
\end{tabular}
\quad
\begin{tabular}{|c|c|c|c|}
            \hline
			\multicolumn{4}{|c|}{p=5}\\
			\hline
			$\gamma_1$ & $\gamma_2$ & B & Comments\\
            \hline
			-4 & 1 & -1 &  \\
			\hline
			-4 & 4 & 0 &  \\
			\hline
			-4 & 7 & 1 &  \\
			\hline
			-4 & 10 & 2 &  \\
			\hline
			-1 & -8 & -2 & not exist \\ 
			\hline
			-1 & -5 & -2 & not exist \\
			\hline
			-1 & -2 & -2 & not exist \\
			\hline
			-1 & 1 & -1 &  \\
			\hline
			-1 & 4 & -1 &  \\
			\hline
			-1 & 7 & 0 &  \\
			\hline
			-1 & 10 & 1 &  \\
			\hline
			2 & -8 & -2 & not exist\\  
			\hline
			2 & -5 & -2 & not exist  \\ 
			\hline
			2 & -2 & -2 & not exist \\ 
			\hline
			2 & 1 & -2 &   \\
			\hline
			2 & 4 & -1 &  \\
			\hline
\end{tabular}
\quad
\begin{tabular}{|c|c|c|c|}
            \hline
            \multicolumn{4}{|c|}{p=5}\\
			\hline
			$\gamma_1$ & $\gamma_2$ & B & Comments\\
            \hline
			2 & 7 & 0 &  \\
			\hline
			2 & 10 & 1 & \\
			\hline
			5 & -8 & -3 & not exist\\  
			\hline
			5 & -5 & -2 & not exist \\
			\hline
			5 & -2 & -2 & not exist \\
			\hline
			5 & 1 & -2 &  \\
			\hline
			5 & 4 & -2 &  \\
			\hline
			5 & 7 & -1 &  \\
			\hline
			5 & 1 & 0 &  \\
			\hline
			8 & -8 & -3 & not exist\\ 
			\hline
			8 & -5 & -3 & not exist \\
			\hline
			8 & -2 & -3 &  \\
			\hline
			8 & 1 & -2 & \\
			\hline
			8 & 4 & -2 &  \\
			\hline
			8 & 7 & -2 &  \\
			\hline
			8 & 10 & -1 &  \\
\hline
\end{tabular}
\quad
\begin{tabular}{|c|c|c|c|}
			\hline
			\multicolumn{4}{|c|}{p=3}\\
			\hline
			$\gamma_1$ & $\gamma_2$ & B & Comments\\
            \hline
			-6 & -7 & -2 &not exist  \\
			\hline
			-6 & -2 & -1 & not exist\\
			\hline
			-6 & 3 & 0 & \\  
			\hline
			-6 & 8 & 1 & \\
			\hline
			-1 & -7 & -2 & not exist \\
			\hline
			-1 & -2 & -2 &  not exist\\
			\hline
			-1 & 3 & -1 &  \\
			\hline
			-1 & 8 & 0 &  \\
			\hline
			4 & -7 & -2 & not exist \\
			\hline
			4 & -2 & -2 & not exist\\ 
			\hline
			4 & 3 & -2 &  \\
			\hline
			4 & 8 & 0 &  \\
			\hline
			9 & -7 & -3 & not exist\\
			\hline
			9 & -2 & -3 &  \\
			\hline
			9 & 3 & -2 &  \\
			\hline
			9 & 8 & -2 &  \\
\hline
\end{tabular}
\end{table}

	 We tabulate some nonexistence results obtained by Theorem \ref{thm:bound} for $n=15$, $-10 \leq \gamma_1, \gamma_2 \leq 10$, $p=5$ and $p=3$ respectively in Table \ref{tab:gamma2}, where $B$ is the upper bound on $\gamma_2$ given in Theorem \ref{thm:bound}. Pairs $(\gamma_1,\gamma_2)$ not included in Table \ref{tab:gamma2} are exclude by Corollary \ref{cor:NPS2}. The empty rows in the table are undecided cases. It is seen  that the case $\gamma_2 = -2$ and $B=-3$ appears in the table, but Theorem \ref{thm:bound} does not say anything about the status of its existence.
	 Actually, it is easily seen that the upper bound on $\gamma_2$ in Theorem \ref{thm:bound} is at least -3. Hence we have the following corollary.
	\begin{corollary}\label{cor:gamma2}
		Let $ p $ be an odd prime number and $ n\in\Z^+ $. Then there does not exist an almost $ p $-ary sequence of type $ (\gamma_1,\gamma_2) $ and period $ n+2 $ with two consecutive zero-symbols for  $\gamma_2 \leq -3$.
	\end{corollary}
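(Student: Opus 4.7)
The plan is to derive the corollary directly from Theorem~\ref{thm:bound} by showing that the upper bound
$$B := \left\lfloor \frac{-p k_1 - 4 + \sqrt{p^2 k_1^2 - 4 p k_1 + 8 p k_2}}{2}\right\rfloor$$
is always at least $-3$ whenever the parameters are valid. By Corollary~\ref{cor:NPS2}, existence of an almost $p$-ary NPS of type $(\gamma_1,\gamma_2)$ and period $n+2$ forces both $k_1 = (n-\gamma_2-2)/p$ and $k_2 = (n-\gamma_1-1)/p$ to be nonnegative integers, so the analysis may be restricted to such pairs.

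The inequality $B \geq -3$ is equivalent to $\sqrt{p^2 k_1^2 - 4 p k_1 + 8 p k_2} \geq p k_1 - 2$, which I would verify by splitting on the sign of $pk_1 - 2$. If $pk_1 \leq 2$ (so $k_1 = 0$, since $p$ is odd and at least $3$), the right-hand side is nonpositive while the left-hand side equals $\sqrt{8 p k_2} \geq 0$, and the inequality is immediate. If $pk_1 \geq 3$, both sides are nonnegative, and squaring reduces the claim to $8 p k_2 \geq 4$, that is, $k_2 \geq 1/(2p)$; since $k_2$ is a nonnegative integer, this is equivalent to $k_2 \geq 1$. Therefore $B \geq -3$ holds except possibly in the regime $pk_1 \geq 3$ with $k_2 = 0$.

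The main obstacle is this last regime, where the discriminant in Theorem~\ref{thm:bound} may even become negative and the theorem alone supplies no bound. I would dispose of it by a short direct autocorrelation argument, independent of Theorem~\ref{thm:bound}. The condition $k_2 = 0$ is equivalent to $\gamma_1 = n-1$. On the other hand, with the two zeros at positions $0$ and $1$, the sum $C_{\underline{a}}(1) = \sum_{i=2}^{n} a_i \overline{a}_{i+1}$ consists of $n-1$ terms, each a $p$-th root of unity. Hence $|C_{\underline{a}}(1)| \leq n-1$, and equality with the real value $n-1$ forces every summand to equal $1$, giving $a_2 = a_3 = \cdots = a_{n+1}$. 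A direct calculation then shows $C_{\underline{a}}(t) = n-2$ for every $t \in \{2,\dots,n\}$, so $\gamma_2 = n-2$ and consequently $k_1 = 0$, contradicting $pk_1 \geq 3$. Thus $B \geq -3$ in every valid case, and Theorem~\ref{thm:bound} rules out $\gamma_2 \leq -3$.
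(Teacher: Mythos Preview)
Your argument is correct and follows the same line as the paper: deduce the corollary from Theorem~\ref{thm:bound} by showing that the bound $B$ there is always at least $-3$. The paper offers only the one-line remark ``it is easily seen that the upper bound on $\gamma_2$ in Theorem~\ref{thm:bound} is at least $-3$'' and gives no further detail, whereas you supply the missing computation and, importantly, isolate the edge case $k_2=0$ with $k_1\ge 1$.

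That edge case is not a mere technicality: when $k_2=0$ and $pk_1\ge 3$ the discriminant $p^2k_1^2-4pk_1+8pk_2$ can be negative (e.g.\ $p=3$, $k_1=1$) or, when nonnegative, one actually gets $\sqrt{p^2k_1^2-4pk_1}<pk_1-2$, so Theorem~\ref{thm:bound} alone yields $B\le -4$ or no bound at all. Your separate autocorrelation argument---$\gamma_1=n-1$ forces all nonzero entries equal, whence $\gamma_2=n-2\ge 0$---cleanly eliminates this regime and is a genuine addition to the paper's sketch. In short, you reach the same conclusion by the intended route, but your treatment is more complete than what the paper records.
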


	\section{Conclusion}
	In this paper, we prove a lower and an upper bounds on the number of  distinct out-of-phase autocorrelation coefficients of an almost $p$-ary sequence of period $n+s$ with $s$ consecutive zero-symbols.  Theorem \ref{th:nbrg} shows that  the number of distinct out-of-phase autocorrelation coefficients is between $\min\{s,p,n\}$ and $n-1+\min\{n,s\}$. Therefore one can not get an NPS of type $\gamma$ by adding extra zero-symbols at consecutive positions. We next prove in Theorem \ref{th:p-DPDS} that a $p$-ary NPS of type $(\gamma_1, \gamma_2)$ is equivalent to a PDPDS. Then, we obtain that they only exist when $p$ divides $n-\gamma_2-2$ and $n-\gamma_1-1$. We give in 
	Theorem \ref{thm:bound}  a necessary condition on  $ \gamma_2 $ for the existence of an almost $p$-ary NPS of type $(\gamma_1, \gamma_2)$. In particular we show that they don't exist for $ \gamma_2\leq-3 $.
	
\section*{Acknowledgement}
The authors are supported by the Scientific and Technological Research	Council of Turkey (TÜBİTAK) under Project No: \mbox{116R026}. The authors would like to thank Alexander Pott for constructive criticism of the manuscript.

    \bibliographystyle{splncs03}
	\bibliography{ozdenyayla}   
\end{document}